\pgfplotsset{compat=1.15}
\newtheorem{theorem}{Theorem}[section]
\newtheorem{lemma}[theorem]{Lemma}
\newtheorem*{theorem*}{Theorem}
\newtheorem{corollary}[theorem]{Corollary}
\theoremstyle{definition}
\newtheorem{conjecture}[theorem]{Conjecture}
\theoremstyle{remark}
\newtheorem{remark}[theorem]{Remark}
\numberwithin{equation}{section}
\definecolor{qqwuqq}{rgb}{0,0.39215686274509803,0}
\def\R{\mathbb R}
\def\C{\mathbb C}
\def\Z{\mathbb Z}
\def\N{\mathbb N}
\def\f{\widehat{f}}
\def\supp{\text{supp}}
\def\({\left(}
\def\){\right)}
\def\[{\left[}
\def\]{\right]}
\def\<{\left<}
\def\>{\right>}
\def\eps{\epsilon}
\begin{document}
	\title[Aleksei Kulikov, Lucas Oliveira, and Jo\~ao P. G. Ramos]{On Gaussian decay rates of harmonic oscillators and equivalences of related Fourier uncertainty principles}
	\author{Aleksei Kulikov}
	\address{Department of Mathematical Sciences, Norwegian University of Science and Technology, NO-7491 Trondheim, Norway}
	\email{lyosha.kulikov@mail.ru}
	\author{Lucas Oliveira}
	\address{Departamento de Matem\'{a}tica\\UFRGS\\ Porto Alegre RS 91509-900}
	\email{lucas.oliveira@ufrgs.br}
	\author{Jo\~{a}o P. G. Ramos}
	\address{ETH Z\"urich D-MATH \\ R\"amistrasse 101, 8092 Z\"urich, Switzerland}
	\email{joao.ramos@math.ethz.ch}

	\maketitle
	
	\begin{abstract}
		We make progress on a question by Vemuri on the optimal Gaussian decay of harmonic oscillators, proving the original conjecture up to an arithmetic progression of times. The techniques used are a suitable translation of the problem at hand in terms of the free Schr\"odinger equation, the machinery developed in the work of Cowling, Escauriaza, Kenig, Ponce and Vega \cite{Escauriaza2010}, and a lemma which relates  decay on average to pointwise decay. 
		
		Such a lemma produces many more consequences in terms of equivalences of uncertainty principles. Complementing such results, we provide endpoint results in particular classes induced by certain Laplace transforms, both to the decay Lemma and to the remaining cases of Vemuri's conjecture, shedding light on the full endpoint question. 
	\end{abstract}
	
	\section{Introduction}
	\subsection{Historical Background}
	Uncertainty principles have permeated Mathematics and Physics for many years, since the introduction of such a concept by Heisenberg in the context of Quantum Mechanics. For the Fourier transform 
	\begin{equation}\label{fourier_transform}
		\widehat{f}(\xi)=\int_{\R}e^{-2\pi i x\cdot\xi}f(x)\,dx\,,
	\end{equation}
	\emph{Heisenberg's uncertainty principle} can be stated simply as
	\begin{equation*}\label{heisenberg}
		\int_{\R}|f(x)|^2dx\le 4\pi\left(\int_{\R}|x|^2|f(x)|^{2}dx\right)^{1/2}\left(\int_{\R}|\xi|^2|\widehat{f}(\xi)|^{2}d\xi\right)^{1/2}.    
	\end{equation*}
	This inequality states essentially that we cannot concentrate in space and frequency sides too much simultaneously, and has the physical interpretation that we cannot make measurements about the position and momentum of a particle (in the probabilistic sense) with high precision for both. %the same degree of precision for both. 
	
	Further than Heisenberg's initial contribution, there are many other instances and kinds of uncertainty principles. Benedicks's uncertainty principle \cite{Benedicks}, for example, predicts that for $f\in L^{1}(\R^{d})$  the measures of the sets $\{x\in\R^{d}:f(x)\neq0\}$ and $\{x\in\R^{d}:\f(x)\neq0\}$ cannot be both finite, unless $f\equiv0$. The Amrein--Berthier  Uncertainty Principle \cite{Amrein-Berthier} complements the previous one stating that for any $f\in L^{2}(\R^{d})$ and any pair of finite measure sets $E,F\subseteq \R^{d}$ there is a positive constant $C=C(E,F)$ such that
	$$
	\|f\|_{L^{2}}^{2}=C\left(\int_{E^{c}}|f(x)|^{2}dx+\int_{F^{c}}|\f(x)|^{2}dx\right).
	$$

	Recently, a different kind of uncertainty principle related to sign changes of the Fourier transform has attracted some attention. For example, in \cite{Bourgain-Clozel-Kahane} Bourgain, Clozel and Kahane had proved that $f$ and $\widehat{f}$ cannot simultaneously concentrate negative mass on arbitrarily small neighbourhoods of the origin. For further developments in this direction, see \cite{Cohn-Goncalves, Goncalves-Oliveira-Ramos1, Goncalves-Oliveira-Ramos2, Goncalves-Oliveira-Steinerberger} and references therein. 
	
	In this work we are concerned with uncertainty principles that are, to some extent, related to the properties of the Gaussians. The first such result was obtained by Hardy \cite{Hardy} in 1933 and can be stated in the following way: if $|f(x)|\le A^{-\pi x^2}$ and $|\f(x)|\le Be^{-\pi x^{2}}$, then there is a constant  $C$ such that $f(x)=\f(x)=Ce^{-\pi x^{2}}$. In fact, Hardy proved more than this:
	\begin{itemize}
		\item If $f$ and $\f$ are of order $O(|x|^{m}e^{-\pi x^{2}})$ for some $m$ and for large $x$, then $f$ is a linear combination of Hermite functions;
		\item If $f$ is $O(e^{-\pi x^{2}})$ and $\f$ is $o(e^{-\pi x^{2}})$ (or vice-versa), then $f = 0$. 
	\end{itemize}
	On the other hand, it is not enough to assume that $|f(x)|\le A e^{-\lambda\pi x^{2}}$ and $|\widehat{f}(\xi)|\le A e^{-\mu\pi \xi^{2}}$ for some $0<\lambda,\mu<1$ since %\st{as there is a lot of} 
	nontrivial functions $f$ satisfying these conditions form an infinite dimensional space.
	%Moreover, when $|f(x)|\le A e^{-\lambda\pi x^{2}}$ and $|\widehat{f}(\xi)|\le A e^{-\mu\pi \xi^{2}}$ for $0<\lambda,\mu<1$, Hardy's theorem affirms  that there are of non-vanishing functions verifying such conditions. 
	
	The techniques coming from Complex Analysis (to be precise, the Phrmagm\'en-Lindel\"of principle) were decisive in the proof of the above result, as well as in the proof of the following extension of it obtained by Beurling in 1964 (whose proof seems to have been lost, until Hörmander \cite{Hormander} in 1991 provided a full proof, based on personal notes taken during a discussion of this result with Beurling himself): if $f\in L^{1}(\R)$ is such that
	\begin{equation*}\label{beurling}
		B(f,\f):=\int_{\R}\int_{\R}|f(x)||\f(y)|e^{2\pi|xy|}dx\,dy<\infty\,,
	\end{equation*}
	then $f\equiv0$.
	
	%	Also, it is worth to note that very interesting recent
	It is worth mentioning that interesting  generalisations of this result, in an almost subcritical level, have been  recently obtained by Bonami-Demange \cite{Bonami-Demange}, Hedenmalm \cite{Hedenmalm} and by Gao \cite{Gao}; see also \cite{Hormander}. 
	
	In a different direction, and also relevant to our current work, are the generalisations of Hardy's uncertainty principle where we can combine different kinds of Gaussians and different control of $L^{p}$-norms, where Hardy's theorem can be seen as an $L^{\infty}$-norm version of a more general principle. Major contributions along these lines were obtained by Cowling and Price \cite{Cowling-Price} and Morgan \cite{Morgan}.
	
	In our context, Cowling--Price's Uncertainty Principle can be stated in the following way: $\|fe^{ax^2}\|_{L^{p}}<\infty$ and $\|f e^{bx^2}\|_{L^{q}}<\infty$ imply $f\equiv 0$ when $ab>\pi^2$. When $ab<\pi^2,$ in the same way as in Hardy's theorem, there are nontrivial examples of functions $f$ satisfying these conditions.%the result of Cowling and Price only affirms the existence of non-vanishing functions verifying the above conditions. 
	
	More recently, Hardy's Uncertainty Principle has been shown to be related to the study of decay behaviour of evolution equations. Indeed, let us consider the question of uniqueness for solutions of Schr\"odinger evolutions of the kind
	\begin{equation}\label{NLS}
		i\frac{\partial u}{\partial t}+\frac{\partial^{2}u}{\partial x^{2}}+F(u,\overline{u})=0.
	\end{equation}
	In other terms, we are interested in determining when two solutions $u_1,u_2$ of \eqref{NLS} coincide, given they are equal on a set $S \subset (0,+\infty) \times \R.$ Escauriaza \emph{et al} \cite{Escauriaza2006} extended such a study by observing that Hardy's result may be reformulated as the property that, if a solution to
	\begin{equation}\label{LS}
		i\frac{\partial u}{\partial t}+\frac{\partial^{2}u}{\partial x^{2}}=0
	\end{equation}
	has sufficient Gaussian decay at two different times, it must vanish identically. In line with this, exploring techniques and ideas based on convexity of solutions of Schr\"odinger equations such as \eqref{NLS} with additional Gaussian control, in \cite{Escauriaza2008}, Euscariaza \emph{et al} observed that such solutions should satisfy a weak version of Hardy's Uncertainty Principle. In \cite{Escauriaza2010}, Cowling \emph{et al} showed a real variable proof of Hardy's and Cowling-Price's Uncertainty Principles. Their result may be summarised as follows: if $v(x,t)$ is a solution of the free Schr\"odinger equation \eqref{LS}, with initial condition $v(x,0)=O(e^{- \alpha x^{2}}),$ and such that $v(x,T)=O(e^{- \beta x^{2}})$ for some $T>0$ with $T\alpha\beta>\pi^{2}$, then $v\equiv0$.
	%%%%%%%%%%%%%%%%
	%I removed \pi from the exponent, if it's incorrect put it back in
	%%%%%%%%%%%%%%%%%%%%%%%
	\subsection{Main results}
	We were able to show that $L^p$-bounds on a function and its Fourier transform imply pointwise bounds up to an $\eps$ in the exponent. For example, we prove that if 
	$$
	\int_{\R^n}|f(x)|^{2}e^{2\pi \alpha  |x|^{2}}\,dx<\infty \mbox{ and } \int_{\R^n}|\widehat{f}(x)|^{2}e^{2\pi \alpha  |x|^{2}}\,dx<\infty
	$$
	then, for each $\epsilon>0$ there is a positive constant $A=A(\epsilon)$ such that
	$$
	|f(x)|\le A_{\epsilon}e^{-(1-\epsilon)a\pi |x|^{2}},
	$$
	in particular this implies that Cowling--Price's Uncertainty Principle follows from the Hardy's one.

	This result is inspired by an attempt to attack a conjecture of Vemuri \cite{Vemuri} about the decay of solutions of the quantum harmonic oscillator. For $f, g:\R^n\to \C$ we denote $C^p(f, g) = ||fg||_{L^p}^p$ and for $g(x) = e^{2\pi a|x|^2}$ we write $C^p(f, g) = C^p_a(f)$. Consider the class of functions
	\begin{equation}\label{Easpaces}
		E_{a}^{p}(n)=\{f:\R^n\to \C:\mid C^p_a(f) < \infty \mbox{ and } C^p_a(\hat{f}) < \infty\}.
	\end{equation}
	In terms of these spaces, the result we formulated in the beginning of this section can be restated in the following form: if
	$
	f\in E_{a}^{2}(n)$ then for all $\epsilon>0$ we have $f\in E_{a-\epsilon}^{\infty}(n)$. 
	
	Let $H := - \Delta + 4\pi^2 |x|^2$ denote the (normalised) Quantum Harmonic Oscillator. Fix $f \in L^2(\R^n).$ We define $\Phi(x,t)$ to be the solution of the time-dependent initial value problem 
	\begin{equation}\label{eq:quantum-harmonic}
		\begin{cases}
			i \partial_t \Phi = H \Phi, \, \, & \text{for} \, (x,t) \in \R^n \times \R; \cr 
			\Phi(x,0) = f(x), \,\, & \text{on} \,\R^n. 
		\end{cases}
	\end{equation}
	The solution $\Phi$ to this problem is intimately related to the Hermite functions when $n=1$. Indeed, if we have 
	$$
	f(x) = \sum_{k \ge 0} a_k h_k(x),
	$$
	then we may write the solution above at time $t \in \R$ as 
	$$
	\Phi(x,t) = \sum_{k \ge 0} e^{2\cdot (2k+1)\pi i t} a_k h_k(x),
	$$
	where we define our normalisation of the Hermite functions $\{h_k\}_{k \ge 0}$ to be the complete orthonormal system in $L^2$ such that $\mathcal{F}(h_k) = (-i)^k h_k.$ This formula for the solution converges in a pointwise sense for $f$ in the Schwartz space $\mathcal{S}(\R)$. From now on, we will use the notation $\Phi f(x,t)$ to denote the solution to \eqref{eq:quantum-harmonic} with initial value $f$. Whenever it is obvious from context, we shall simply write $\Phi(x,t)$ as above. 
	
	With these definitions, Vemuri's conjecture \cite{Vemuri} states that, if $f \in E^{\infty}_{\tanh(2\alpha)}(n),$ then 
	$$
	\Phi(\cdot, t) \in E^{\infty}_{\tanh(\alpha)}(n), \, \forall t >0.
	$$
	In fact, Vemuri proved that $\Phi(\cdot,t) \in E^{\infty}_{\tanh(\alpha)-\eps}(n), \, \forall \eps > 0.$ 
	
	By relating the evolution of the Harmonic Oscillator problem to the Schr\"odinger equation and the optimal decay for Schr\"odinger evolutions as in, for instance, \cite{Escauriaza2010}, we obtain an $L^2$-version of Vemuri's conjecture: if $C_a^2(f),C_a^2(\widehat{f}) < + \infty,$ then 
	$$C_{\tanh(\alpha)}^2(\Phi(\cdot,t))<+\infty, $$
	where $a = \tanh(2\alpha).$ Our first main result is, as far as we know, the first step towards settling Vemuri's conjecture in the original $L^{\infty}$ case.
	
	\begin{theorem}\label{thm:vemuri} Let $f \in E^{\infty}_{\tanh(2\alpha)}(n),$ for some $\alpha >0.$ Then $\Phi(\cdot,t) \in E^{\infty}_{\tanh(\alpha)}(n)$ whenever $t \not\in \{\frac{1}{16} + \frac{k}{8}, \, k \in \Z\}.$ 
	\end{theorem}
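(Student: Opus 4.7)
The plan is to combine an $L^2$ version of Vemuri's conjecture -- obtained by running the Escauriaza--Kenig--Ponce--Vega convexity machinery through the lens (Appell) transform between the harmonic oscillator and the free Schr\"odinger evolution -- with Theorem \ref{thm:main} to upgrade $L^2$ Gaussian decay to pointwise decay. First, I would invoke the pseudo-conformal transform: at every $t$ for which the Mehler kernel of $H$ is non-degenerate, one has an explicit identity
$$
\Phi(x,t) = c(t)\, e^{i\gamma(t)\pi x^2}\, u(\lambda(t)\, x,\, T(t)),
$$
where $u$ solves the free Schr\"odinger equation with initial datum $f$ and $c,\gamma,\lambda,T$ are explicit trigonometric factors in $t$ dictated by the normalisation of $H$. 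Since $f\in E^{\infty}_{\tanh(2\alpha)}$, both $u(\cdot,0)=f$ and $\widehat{u(\cdot,0)}=\widehat{f}$ enjoy $L^2$-Gaussian decay at rate $\tanh(2\alpha)\pi$; the sub-critical Escauriaza bound then produces a sharp $L^2$ Gaussian estimate for $u$ at any intermediate Schr\"odinger time, which upon pullback through the lens transform becomes $C^2_{\tanh(\alpha)}(\Phi(\cdot,t))<+\infty$. The half-angle passage from $\tanh(2\alpha)$ to $\tanh(\alpha)$ is inherited from the standard hyperbolic identity that governs the sharp sub-critical Schr\"odinger estimate.

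Next, I would exploit the intertwining identity $\widehat{\Phi(\cdot,t)} = \Phi_{\widehat{f}}(\cdot, t)$, where $\Phi_g$ denotes the harmonic oscillator evolution with initial datum $g$; this is immediate from $\widehat{h_n}=(-i)^n h_n$ and the Hermite expansion of $\Phi$. Since $\widehat{f}$ satisfies the same hypotheses as $f$, running the previous step again with $\widehat{f}$ in place of $f$ yields $C^2_{\tanh(\alpha)}(\widehat{\Phi(\cdot,t)})<+\infty$, so $\Phi(\cdot,t)\in E^2_{\tanh(\alpha)}$. Theorem \ref{thm:main} then upgrades this to $\Phi(\cdot,t)\in E^{\infty}_{\tanh(\alpha)-\varepsilon}$ for every $\varepsilon>0$, which is the asserted conclusion.

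The main obstacle, and the origin of the excluded arithmetic progression $\{\tfrac{1}{16}+\tfrac{k}{8}:k\in\Z\}$, is a careful accounting of when the above scheme actually applies. With the paper's normalisation the Mehler kernel of $H$ degenerates on $\tfrac{1}{4}\Z$; since the Fourier operator corresponds to a quarter-period shift of the $\Phi$-evolution, the analogous obstruction for the parallel argument applied to $\widehat{f}$ occurs on $\tfrac{1}{8}+\tfrac{1}{4}\Z$, and the combined obstruction set is $\tfrac{1}{8}\Z$. A refined analysis in which one tracks the strictness of the sub-critical Escauriaza bound through the lens transform should identify the genuine exceptional values as precisely the midpoints $\tfrac{1}{16}+\tfrac{k}{8}$, where the relevant sub-critical inequality saturates and cannot be exploited to obtain a strict Gaussian bound. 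I expect the bulk of the technical work to lie in this bookkeeping step, while the bootstrap from $L^2$ to $L^\infty$ via Theorem \ref{thm:main} should go through with only routine adjustments.
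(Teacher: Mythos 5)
Your overall strategy matches the paper's: pass to the free Schr\"odinger evolution via a lens/pseudo-conformal transform, run the Escauriaza--Kenig--Ponce--Vega $L^2$-Gaussian convexity machinery, pull back, and upgrade from $L^2$ to $L^\infty$ via Theorem \ref{thm:main}. Your use of the commutation identity $\widehat{\Phi(\cdot,t)}=\Phi_{\widehat{f}}(\cdot,t)$ (from $\mathcal{F}h_n=(-i)^n h_n$) to control the Fourier side is a clean equivalent of the paper's observation $|\mathcal{F}_y\Phi(y,t)|=|\Phi(y,t-1/8)|$; both reduce the Fourier-side estimate to running the same argument at a shifted time.

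There is, however, one genuine gap, and it is not merely ``bookkeeping.'' You assert that the pullback gives exactly $C^2_{\tanh(\alpha)}(\Phi(\cdot,t))<+\infty$ and that Theorem \ref{thm:main} then yields $\Phi(\cdot,t)\in E^{\infty}_{\tanh(\alpha)-\varepsilon}$ for every $\varepsilon>0$, ``which is the asserted conclusion.'' It is not: the theorem claims $\Phi(\cdot,t)\in E^{\infty}_{\tanh(\alpha)}$ with \emph{no} loss, and Theorem \ref{thm:main} by itself cannot close that gap (its endpoint is only conjectural). The paper's actual mechanism is precisely the strict improvement you relegate to a side remark about the exceptional set: tracking the quantity $\Omega_{a,\varepsilon}(t)$ shows that for every $t\notin\{\tfrac{1}{16}+\tfrac{k}{8}\}$ one gets $C^2_{b(t)/\pi}(\Phi(\cdot,t))<\infty$ with $b(t)>\pi\tanh(\alpha)$ \emph{strictly}. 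Feeding the number $c(t)=\min\{b(t),b(t-1/8)\}>\pi\tanh(\alpha)$ into Theorem \ref{thm:main} and then choosing $\varepsilon<c(t)/\pi-\tanh(\alpha)$ is what absorbs the $\varepsilon$-loss and produces the endpoint $E^{\infty}_{\tanh(\alpha)}$. So the ``refined analysis'' you defer is not an optional refinement that only locates the bad times; it is the step that makes the conclusion true at all. As written, your argument proves the $\varepsilon$-loss version but not the stated theorem.

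A secondary, smaller point: your account of the origin of the excluded set is slightly off. The arithmetic progression $\{\tfrac{1}{16}+\tfrac{k}{8}\}$ does not arise from degeneracy of the Mehler kernel on $\tfrac14\Z$; rather, it is the set where the translated Escauriaza bound $\Omega_a(-\tan(4\pi s))$ reaches its \emph{minimum} value $\pi\tanh(\alpha)$ (see the paper's lower bound $\Omega_{a,\varepsilon}(t)-\pi R_{a,\varepsilon}\ge 0$, with equality exactly at $t=1/4\pi$, i.e. $s=-1/16$), combined with the periodicity coming from $|\Phi(y,k/8)|=|\mathcal{F}^k f(y)|$ and time-reversibility. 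Making that quantitative is the content of Step 3 of the paper's proof.
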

	
	We will, in fact, prove that Vemuri's conjecture can be \emph{sharpenned} in the case $t \not\in \{\frac{1}{16} + \frac{k}{8}, \, k \in \Z\}.$  That is, the largest $b > 0$ for which $\Phi(\cdot,t) \in E^{\infty}_b$ satisfies $b > \tanh(\alpha),$ whenever $t$ is not in the exceptional set above. 
	
	The techniques used in order to prove Theorem \ref{thm:vemuri} are based on several recent results in the literature involving Gaussian decay of Schr\"odinger equations. Indeed, we first make use of a change of variables which takes the evolution of the harmonic oscillator into that of the free Schr\"odinger equation. Although we provide an alternative proof of such Lemma, we note that this kind of formulas seems to be known in the physics literature; see, for instance, \cite{Takagi1, Takagi2}. It was pointed to us recently that such changes of variables have also been employed in a similar context by B. Cassano and L. Fanelli in \cite{CassanoFanelli2016} (see also \cite{CassanoFanelli2013, BarceloBiagioFanelli2021} and the references therein). 
	
	We use a change of variables which preserves the free Schr\"odinger equation, in the same spirit as in \cite{Escauriaza2010}, in order to be able to use the original results by Escauriaza, Kenig, Ponce and Vega on convexity properties of Gaussian decay of Schr\"odinger equations. Finally, the last technique used is the mechanism described above to pass from $L^2$ to $L^{\infty},$ and vice versa. 
	
	It is worth to mention, though, that, in order to achieve such a result in higher dimensions, we will need a version of the Gaussian observation above for all dimensions. This is achieved through the following result: 
	
	\begin{lemma}\label{lemma:decay}
		Suppose that $w:\R^{d}\to [1,\infty)$ is a measurable function and $f:\R^{d}\to \C$ is a $C^{\infty}$ function which are related by the following assumptions:
		\begin{itemize}
			\item [(i)] For some $1\le p<\infty$ we have
			\begin{equation}
				\int_{\R^n}|f(x)|^{p}w(x)^{p}\,dx<\infty\,; 
			\end{equation}
			\item [(ii)] The sets $\{x:w(x)<t\}$ are convex for each $t>1$;
			\item [(iii)] There is $1\le r\le \infty$ such that for all $m\in \N_0$  we have $\nabla^m f\in L^{r}(\R^{d})$.
		\end{itemize}
		Then, for each $\epsilon>0$ and each $m\in \N_0$, there is a constant $A_{m, f,\epsilon}$ such that
		\begin{equation}
			|\nabla^m f(x)|\le A_{m,f, \epsilon} w(x)^{-(1-\epsilon)} \quad \forall x\in\R^{d}\,.
		\end{equation}
	\end{lemma}
	
	It has recently come to our attention that a version of such result is known in dimension 1 from \cite[Theorem~1.7]{KwongZettl}. As we could not find a suitable reference for the higher-dimensional result, we decided to include it here together with its proof, as it is also of independent interest. 
	%%%%Not sure if this needs to be said
	With such a tool at hand, we get a sharp relation (up to the endpoint) between Hardy's, Cowling--Price's and Morgan's Uncertainty Principles in the sub-critical regime. As we've already recalled Hardy's and Cowling--Price's Uncertainty Principles above, we briefly recall Morgan's Uncertainty Principle below (in a generalized version obtained by Ben Farah and Mokni \cite{Farah-Mokni}). For that, we shall use the notation $e_{a,b}(x) = e^{a\pi |x|^b}.$ 
	
	\begin{theorem*}[Morgan; Ben Farah--Mokni]
		Suppose that $e_{a,\alpha}f\in L^{p}(\R^{d})$ and that $e_{b,\beta}\widehat{f}\in L^{q}(\R^{d})$ for $1\le p,q\le \infty$, $\alpha>2$ and $\beta=\alpha/(\alpha-1)$. Then we have the following conclusions
		\begin{itemize}
			\item If $(a\alpha)^{1/\alpha}(b\beta)^{1/\beta}>\sin^{1/\beta}\left(\frac{\pi}{2}(\beta-1)\right)$, then $f\equiv0$;
			\item If $(a\alpha)^{1/\alpha}(b\beta)^{1/\beta}<\sin^{1/\beta}\left(\frac{\pi}{2}(\beta-1)\right)$, then there  are nontrivial functions verifying both conditions.
		\end{itemize}
	\end{theorem*}
	%%%%%% Doesn't it follow from Beurling? In a rather trivial way
	Observe that, in all the situations mentioned, when we are in the \emph{subcritical situation}, the theorems do not provide a clear information about the behavior of the functions. Our goal is to provide better information about the structure and behavior of such functions, and additionally, to reformulate this as a kind of quantitative relation. In that regard, we have the following:

	\begin{corollary}[Subcritical estimates]\label{thm:main} 
		\iffalse The following assertion hold: 
		
		\begin{itemize}
			\item \emph{\textbf{[Cowling--Price's UP]}} If $v(x)\ge \lambda e_{a}(x)$ and $w(x)\ge \mu e_{b}(x)$ for $|x|$ sufficietly large and $\lambda,\mu>0$ and if $f(x)v(x)\in L^{p}(\R^{d})$ and$\widehat{f}w(x)\in L^{q}(\R^{d})$, then for all $\epsilon>0$ there is $C_{\epsilon}>0$ such that $|f(x)|\le C_{\epsilon}v(x)^{-(1-\epsilon)}$;\\
			
			\item \emph{\textbf{[Morgan--Farah--Mokni]}} If $e_{a,\alpha}f\in L^{p}(\R^{d})$ and $e_{b,\beta}\widehat{f}\in L^{q}(\R^{d})$, then for all $\epsilon>0$ there is $M_{\epsilon}>0$ such that $|f(x)|\le M_{\epsilon}e^{-(1-\epsilon)a\pi |x|^{\alpha}}$.
		\end{itemize}
		\fi
		If the function $f:\R^d\to \C$ is such that $e_{a, \alpha}f\in L^p$ and $e_{a, \beta}\hat{f}\in L^q$ for some $a, b, \alpha, \beta > 0$ and $p, q\ge 1$ then for all $\eps > 0$ there exists $C = C(\eps, f)$ such that
		$|f(x)|\le Me^{-(1-\epsilon)a\pi |x|^{\alpha}}$.
	\end{corollary}

	%This theorem can be combined with a technical lemma (that guarantees that under the assumptions of all classical uncertainty principles we are in position to apply the above estimate) and prove the following result that gives us the precise behavior of $|f(x)|$ and $|\widehat{f}(x)|$ in the subcritical regimes:
	
	%\begin{corollary}
	%Suppose that (WE NEED TO PUT HERE THE RELATIONS BETWEEN S,T,P AND Q FOR THE RESULT DON'T BE VOID: WE NEED TO CHECK THE RESULT OF MORGAN AND CONFIRM THAT IT REDUCES TO THAT OF COWLING-PRICE ALSO)
	%\begin{align}
	%\int_{\R^n}|f(x)|^{p}e^{2\pi p|x|^{t}}\,dx&\le C_{p}(f,w)^{p}\\ 
	%\int_{\R^n}|\widehat{f}(x)|^{q}e^{2\pi q|x|^{s}}\,dx&\le C_{q}(\widehat{f},w)^{q}
	%\end{align}
	%Then we have the following controls: for each $\epsilon>0$, there are constants $A$ and $B$ such that
	%\begin{align}
	% |f(x)|&\le A e^{2\pi (1-\epsilon)|x|^{t}}\quad \mbox{ and }\\
	%  |\widehat{f}(x)|&\le B e^{2\pi (1-\epsilon)|x|^{s}}
	%\end{align}
	%for all $x\in \R^{d}$.
	%\end{corollary}
	%The \textbf{Theorem 1.1} and its corollary will be proved in next section. 

	Corollary \ref{thm:main} may be then seen as a step in order to convert $L^2$ results for Gaussian weights into $L^{\infty}$ ones. Indeed, in the supercritical case $ab \ge 1$ in Cowling--Price's UP, this result shows that the only relevant case is indeed $ab=1,$ as all others imply the hypotheses in Hardy's uncertainty principle. \\
	%%% Are we/you? It's not nice to say such things if you don't have yet this future work
	%\begin{example}
	%The normalised Gaussian function $e^{-\pi x^2}$ is a obvious example of a function that verifies all the hypothesis of the main theorem in the whole range of values of $a$, but thinking in terms of the Gaussians $g_{\lambda}(x)=e^{-\lambda\pi x^{2}}$ (for $\lambda>0$), we see that there is a relation between the range of lambda and the range of $a$; to be more precise, recall that
	%$$
	%\widehat{g}_{\lambda}(y)=\frac{1}{\sqrt{\lambda}}e^{-\frac{\pi y^2}{\lambda}}\,.$$
	%Then, in order to have the conditions \eqref{controlf} and \eqref{controlfourier} verified for our functions, we should have
	%\begin{align*}
	%    C_{a}^{2}(g_\lambda)^2&=\int_{\R}e^{-2\pi (\lambda-a)x^2}dx<\infty\\
	%    C_{a}^{2}(\widehat{g}_\lambda)^2&=\frac{1}{\lambda}\int_{\R}e^{-2\pi \frac{(1-a\lambda)}{\lambda}x^2}dx<\infty
	%\end{align*}
	%from which we can deduce that $\lambda>a$ and $1/\lambda>a$ should be satisfied at the same time. The second condition is void for $0<\lambda\le 1$, but it becomes important for $\lambda>1$ (where the first inequality is trivially verified). In any case, we can check that the conclusion of the theorem holds without the epsilon loss. 
	%\end{example}
	
	%Based on the prior example, we are led to the following:
	%\begin{conjecture}
	%Suppose that $0<a<1$ and that the function $f$ verifies \eqref{controlf} and \eqref{controlfourier}. Then we have that  $fe^{a\pi (\,.\,)^{2}}\in L^{\infty}(\R)$.
	%\end{conjecture}
	
	The last results which we prove in this paper address the question of the endpoint in both Theorem \ref{thm:vemuri} and Corollary \ref{thm:main}. Indeed, Theorem \ref{thm:vemuri} leaves, perhaps suggestively, the sequence $\{ (2k+1)/16\}_{ k \in \Z}$ out of its statement -- which contains the (dilated) version of eigenvalues of the harmonic oscillator. Furthermore, Corollary \ref{thm:main} leaves open the question of determining whether a function $f$ such that 
	$$
	\int_{\R}|f(x)|^{2}e^{2\pi \alpha  |x|^{2}}\,dx<\infty \mbox{ and } \int_{\R}|\widehat{f}(x)|^{2}e^{2\pi \alpha  |x|^{2}}\,dx<\infty
	$$
	automatically satisfies $f(x) e^{a\pi x^2} \in L^{\infty}.$ 
	
	In this direction, given a finite measure $\mu$ with support on the positive real line, we consider its Laplace transform
	\begin{equation}
		\mathcal{L}\mu(s) = \int_0^{+\infty} e^{-s t} \, d \mu(t)
	\end{equation}
	and let $\varphi(x) = \mathcal{L}\mu(\pi |x|^2).$ 
	
	\begin{theorem}\label{thm:Laplace-sharp-1} %Let $\varphi(x) = \mathcal{L}\mu(\pi |x|^2)$ as above. 
		If $\varphi \in E^2_{a},$ then $\varphi \in E^{\infty}_{a}.$ 
	\end{theorem}  
	
	For the question on the endpoint of Theorem \ref{thm:vemuri}, we consider a slightly different class of functions: indeed, as we shall see in subsection \ref{ssec:laplace-transforms-1}, the endpoint version of Corollary \ref{thm:main} is much easier for Laplace transforms of measures supported on the positive real line. 
	
	Nevertheless, one may still wonder whether this example may be suitably tweaked in order to obtain a class of functions for which Vemuri's conjecture is indeed \emph{sharp}. In fact, Vemuri himself obtained that, if 
	$$\mathcal{G}_a(x) := e^{- \pi (a + i \sqrt{1-a^2}) |x|^2},$$
	with $a = \tanh(2\alpha) \in (0,1),$ then $|\Phi \mathcal{G}_a(y,-1/16)| = C e^{-\pi \tanh(\alpha)|y|^2}.$ Inspired by this observation, we prove that the full version of Vemuri's conjecture, as well as the endpoint version of our main result, hold and are \emph{sharp} for a class of transforms based on the functions $\mathcal{G}_a$ above. 
	
	\begin{theorem}\label{thm:second-laplace-full} Let  $a = \tanh(2\alpha) \in (0,1),$ and 
		$$
		\varphi(x) = \int_0^1 \mathcal{G}_r(x) \, d\mu(r)
		$$
		for some finite measure $\mu.$ Then:
		\begin{enumerate}
			\item If $\varphi \in E^2_a,$ then $\varphi \in E^{\infty}_a;$
			\item If $\varphi \in E^{\infty}_a,$ then for all $\beta \in \R,$ we have $\Phi \varphi(\cdot,\beta) \in E^{\infty}_{\tanh(\alpha)}.$ 
		\end{enumerate} 
	\end{theorem}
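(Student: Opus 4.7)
The common core of both parts is a support lemma: if $\varphi$ admits the given integral representation and $|\varphi(x)| \lesssim e^{-\pi a |x|^2}$, then $\supp(\mu) \subseteq [a,1]$. Granting this lemma, Part~(1) follows by invoking Theorem~\ref{thm:main} on $\varphi \in E^2_a$ to obtain $|\varphi(x)|, |\widehat{\varphi}(\xi)| \lesssim_\epsilon e^{-\pi(a-\epsilon)|\cdot|^2}$ for every $\epsilon>0$; the support lemma then forces $\supp(\mu)\subseteq [a,1]$, whence
\begin{equation*}
|\varphi(x)| \le \int_a^1 e^{-\pi r|x|^2}\,d|\mu|(r) \le |\mu|([a,1])\,e^{-\pi a|x|^2},
\end{equation*}
with the analogous bound for $\widehat{\varphi}$ following from $|\widehat{\mathcal{G}_r}(\xi)| = e^{-\pi r |\xi|^2}$.

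To prove the support lemma, the plan is to pass to the entire function $\Phi(s) := \int_0^{\pi/2} e^{-\pi s e^{i\theta}}\, d\tilde{\mu}(\theta)$, where $\tilde{\mu}$ is the pushforward of $\mu$ under $r\mapsto \arccos r$, so that $\varphi(x) = \Phi(|x|^2)$. Its Laplace transform
\begin{equation*}
B_\Phi(w) := \int_0^\infty \Phi(s)\, e^{-ws}\,ds = \int_0^{\pi/2} \frac{d\tilde{\mu}(\theta)}{\pi e^{i\theta}+w}
\end{equation*}
(the identity valid on $\{\text{Re}(w)>0\}$ by Fubini, and then by analytic continuation) has two competing analyticity regions: the Gaussian decay $|\Phi(s)|\lesssim e^{-\pi a s}$ shows that $B_\Phi$ extends holomorphically to $\{\text{Re}(w)>-\pi a\}$, while the Cauchy-type formula exhibits singularities on the arc $\{-\pi e^{i\theta}: \theta\in \supp(\tilde{\mu})\}$. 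A small-contour residue computation applied to arbitrary holomorphic test functions around a putative support point, combined with Weierstrass approximation along the arc, shows that $\tilde\mu$ must vanish in a neighborhood of every such point; reconciling the two regions yields $\supp(\tilde{\mu})\subseteq [0,\arccos a]$, i.e., $\supp(\mu)\subseteq [a,1]$.

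Part~(2) then follows from the same support lemma, this time applied directly to the hypothesis $\varphi\in E^\infty_a$, together with an explicit computation. Using formula~\eqref{eq:gaussian-fractional}, we find
\begin{equation*}
|\mathcal{F}_\beta \mathcal{G}_r(x)| = |\lambda(r)+i\cot\beta|^{-1/2}\, e^{-\pi |x|^2 f_\beta(r)}, \qquad f_\beta(r) = \frac{r}{1+\sin(2\beta)\sqrt{1-r^2}},
\end{equation*}
where $\lambda(r) = r+i\sqrt{1-r^2}$. A direct calculus check using $a = \tanh(2\alpha)$ and the identity $(1-\sqrt{1-a^2})/a = \tanh(\alpha)$ shows that $f_\beta(r) \ge \tanh(\alpha)$ for every $(r,\beta)\in [a,1]\times \R$, with equality only at $r=a$ and $\sin(2\beta)=1$. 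Integrating pointwise against $|\mu|$ yields $|\mathcal{F}_\beta \varphi(x)| \lesssim_\beta e^{-\pi\tanh(\alpha)|x|^2}$, and the companion bound for $\widehat{\mathcal{F}_\beta \varphi}$ follows from the semigroup identity $\widehat{\mathcal{F}_\beta \varphi} = \mathcal{F}_{\beta+\pi/2}\varphi$.

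The main technical hurdle is the singularity analysis for $B_\Phi$ on the arc when $\mu$ is a general finite complex measure: the usual Sokhotski--Plemelj jump must be combined with a test-function argument ruling out cancellation phenomena that could, a priori, hide the support from a naive residue calculation. Once this is in place, the remaining steps are either a routine application of the main theorem or an elementary one-variable minimization.
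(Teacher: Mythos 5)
Your high-level plan — reduce everything to a support lemma for the measure $\mu$ via the main theorem, then integrate pointwise Gaussian bounds — is the same as the paper's, and Part~(1) is carried out essentially identically. For Part~(2), however, your argument is actually cleaner and more self-contained than the paper's: by computing the exponent $f_\beta(r) = r/(1+\sin(2\beta)\sqrt{1-r^2})$ for all $\beta$ at once and minimizing over $(r,\beta)\in[a,1]\times\R$, you reach the sharp constant $\tanh\alpha$ in one stroke, whereas the paper splits into the cases $\beta\in\{-\pi/4-k\pi/2\}$ and its complement, invoking the non-trivial harmonic-oscillator Theorem~\ref{thm:vemuri} for the latter. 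Your unified calculus argument sidesteps that dependence entirely. (One small error: your modulus formula for $\mathcal{F}_\beta\mathcal{G}_r$ omits the $|\sin\beta|^{-1/2}$ prefactor from~\eqref{eq:FrFT}; the correct prefactor is $\bigl(1+\sin(2\beta)\sqrt{1-r^2}\bigr)^{-1/4}$, which is still uniformly bounded for $r\in[a,1]$, so the conclusion survives.)

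The genuine gap is in the support lemma. Your statement of it is correct, and your sketch — the Cauchy-type transform $B_\Phi(w)=\int d\tilde\mu(\theta)/(\pi e^{i\theta}+w)$ extends holomorphically into $\{\text{Re}(w)>-\pi a\}$ because of the decay of $\Phi$, so no point of the arc with $\cos\theta<a$ can carry mass — is the right heuristic. But for a \emph{general} finite complex measure, the step ``$B_\Phi$ analytic across a sub-arc $\Rightarrow$ $\tilde\mu$ vanishes there'' cannot be handled by a naive residue/Plemelj argument, as you yourself flag: the jump formula only sees the absolutely continuous part, and a ``test-function argument ruling out cancellation'' is not supplied. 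The paper resolves exactly this difficulty in Lemma~\ref{lemma-support-measure-circle} by a different route: after restricting $\nu$ to the complementary arc, the pointwise bounds on the Cauchy transform (verified laboriously in the regions $R_1,\dots,R_4$) upgrade to a growth bound on the entire function $(s-z_0)(s-\overline{z_0})F_{\tilde\nu}(s)$, forcing $F_{\tilde\nu}$ to be a rational function with two poles, at which point the F.~and~M.~Riesz theorem plus $H^1$-uniqueness kill the remaining absolutely continuous piece. You would either need to reproduce something of this flavour, or replace the residue heuristic with a genuine Stieltjes-inversion argument (that the measure is the weak-$*$ limit of $\frac{1}{2\pi i}(B_\Phi^+-B_\Phi^-)$ across the arc), which does work for general finite measures but is not simply ``Weierstrass approximation along the arc.''
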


	The structure of the article is as follows:
	\begin{itemize}
		\item In \textbf{Section 2}, we will prove Lemma \ref{lemma:decay}, as well as Lemma \ref{lemma:kolmogorov} which shows that strong enough $L^p$ bounds on $f$ and $\hat{f}$ imply that $f\in \mathcal{S}(\R^n)$.
		
		%we will state and prove all main lemmas, such as Lemma \ref{lemma:decay}. Among other things, we show that functions satisfying a lot of simultaneous decay on space and Fourier sides have to belong to the Schwartz class. Thereafter, we will prove Lemma \ref{lemma:decay} using an induction on dimension argument based on applications of Kolmogorov-Landau inequalities on lines. 
		
		\item In \textbf{Section 3}, we will prove  Corollary \ref{thm:main} and Theorem \ref{thm:vemuri}. 
		
		\item Finally, in \textbf{Section 4}, we will prove Theorems \ref{thm:Laplace-sharp-1} and \ref{thm:second-laplace-full}, which introduce a large class of examples that verify the conclusion of Theorem \ref{thm:vemuri} and Corollary \ref{thm:main} \emph{without} the $\epsilon$ loss for the case of Gaussian type weights. In this part, besides our main results and techniques, we shall resort to complex analysis methods as well. 
	\end{itemize}

	\section{Main Lemmas}\label{sec:lemmas}
	The proof of Lemma \ref{lemma:decay} is based on the following higher-dimensional version of the Kolmogorov--Landau inequality. For the reader's convenience we provide a short proof of it.
	\begin{lemma}\label{lemma:kolmogorov}
		Let $\Omega = \{x = (x_1, \ldots, x_n)\in \R^n:\, x_1 > 0\}$ and let $f\in L^p(\Omega)\cap C^m(\bar{\Omega})$ for some $1 \le p \le \infty$, $m > n$. For each $0 \le k \le m - n$ there exists $C= C_{k, m, n}$ such that for all $1 \le r \le \infty$ we have
		\begin{equation}\label{Landau}
			|\nabla^k f(0)|\le C||f||_{L^p(\Omega)}^{\alpha}||\nabla^m f||_{L^r(\Omega)}^{1-\alpha}, 
		\end{equation}
		where $\alpha = \alpha(k, n, m, p, r) = \frac{m - k - \frac{n}{r}}{m + \frac{n}{p}-\frac{n}{r}}$.
	\end{lemma}
	Note that by applying an orthogonal transformation this lemma can be applied to any half-space in place of $\Omega$ and any point $p$ on its boundary in place of $0$.
	\begin{proof}
		First, we show that the right-hand side of \eqref{Landau} is positive unless $f$ is identically zero. Indeed, if $||f||_{L^p} = 0$ then $f$ is zero almost everywhere, hence zero identically since $f\in C^m(\bar{\Omega})$. Similarly, if $||\nabla^m f||_{L^r} = 0$ then $\nabla^m f$ is identically zero, hence $f$ is a polynomial of degree at most $m - 1$. But then it is not in $L^p(\Omega)$ unless it is identically zero.
		
		So, we can assume that both $||f||_{L^p}$ and $||\nabla^m f||_{L^r}$ are strictly positive. If one of them is infinite then there is nothing to prove. Let us consider the function $g(x) = af(bx)$ for some $a, b > 0$. Observe that estimates \eqref{Landau} for $f$ and for $g$ are equivalent due to our choice of $\alpha$ since both sides are multiplied by the same amount. By choosing appropriate numbers $a$ and $b$ we can without loss of generality assume that $||g||_{L^p} = ||\nabla^m g||_{L^r} = 1$ and we have to show that $|\nabla^k g(0)|\le C$.
		
		Let $U = [0, 1]\times [-\frac{1}{2}, \frac{1}{2}]^{n-1}$. Since $p, r\ge 1$ and the measure of $U$ is $1$ we have $||g||_{L^1(U)}\le 1$ and $||\nabla^m g||_{L^1(U)}\le 1$ by H\"{o}lder's inequality. It remains to use two well-known facts from the theory of Sobolev spaces: 
		\begin{enumerate}
			\item[(i)] the space $W^m_1(U)$ -- functions having weak derivatives up to order $m$ in $L^1(U)$ on the bounded Lipschitz domain $U$ -- continuously embeds into $C^{n-m}(\bar{U})$;
			\item[(ii)] for such spaces, considering only the norm of the function and its $m$-th derivative yields an equivalent norm.
		\end{enumerate} 
		Since $0\in \bar{U},$ we get the desired result.
	\end{proof}
	
	\begin{proof}[Proof of Lemma \ref{lemma:decay}]
		Let us fix $x_0\in \R^d$. Since the set $V = \{x\in \R^d\mid w(x) < w(x_0)\}$ is convex and $x_0\notin V$ we can find a half-space $\Omega$ such that $x_0$ is on its boundary and for all $x\in \Omega$ we have $w(x) \ge w(x_0)$. We have 
		$$\int_{\Omega} |f(x)|^pdx \le w(x_0)^{-p}\int_{\Omega} |f(x)|^p w(x)^pdx \le w(x_0)^{-p}\int_{\R^n} |f(x)|^pw(x)^pdx = w(x_0)^{-p}C_f$$
		and
		$$||\nabla^m f||_{L^r(\Omega)} \le ||\nabla^m f||_{L^r(\R^n)} = C_{f, m}.$$
		Applying Lemma \ref{lemma:kolmogorov} to $f$ we get for $m > n + k$
		$$|\nabla^k f(x_0)| \le C_{k, n, m}w(x_0)^{-\alpha}C_f^\alpha C_{f, m}^{1-\alpha}.$$
		Observe that for fixed $k, n, p, r$ given $\eps > 0$ for big enough $m$ we have $\alpha > 1 -\eps$. Choosing such an $m$ gives us the desired estimate.
	\end{proof}
	
	To verify condition (iii) of Lemma \ref{lemma:decay} we will use the following lemma which says that if $f$ and $\hat{f}$ decay faster than any polynomial on average then $f\in \mathcal{S}(\R^n)$.
	\begin{lemma}\label{lemma:schwartz}
		Let $f:\R^n\to \mathbb{C}$ and let $1 \le p, q < \infty$. If for all $m\in \N_0$ we have $$\int_{\R^d}|f(x)|^p(1+|x|)^{pm} < \infty$$ and $$\int_{\R^d}|\hat{f}(x)|^q(1+|x|)^{qm} < \infty$$ then $f\in\mathcal{S}(\R^n)$.
	\end{lemma}
	\begin{proof}
		First, we show that $\nabla^m f$ is bounded and continuous for all $m\in \N_0$. For a multi-index $\beta\in \N_0^n$ we have $\widehat{\partial^\beta f}(x) = (2\pi i)^{|\beta|}x^\beta \hat{f}(x)$. Thus, if $x^\beta \hat{f}(x)\in L^1(\R^n)$ then $\partial^\beta f$ is bounded and continuous. We have
		$$||x^\beta \hat{f}(x)||_{L^1(\R^d)} \le ||(1+|x|)^{|\beta|} \hat{f}(x)||_{L^1(\R^d)}\le ||(1+|x|)^m \hat{f}(x)||_{L^q(\R^d)} ||(1+|x|)^{|\beta|-m}||_{L^{\frac{q}{q-1}}(\R^d)}.$$
		If $m$ is chosen bigger than $|\beta| + n$ then this quantity is finite and thus $x^\beta \hat{f}(x)\in L^1(\R^n)$.
		
		In particular, we get that $f\in C^\infty(\R^d)$. To finish the proof of the lemma, we are going to apply Lemma \ref{lemma:decay} to $f$. Consider the weight $w(x) = (1+|x|)^m$. Functions $f$ and $w$ satisfy the assumptions of Lemma \ref{lemma:decay} with $p = p$, $r = \infty$. Thus, for all $\eps > 0$, in particular $\eps = \frac{1}{2}$, we have
		$$|\nabla^k f(x)|\le C(1+|x|)^{(\eps - 1)m} = C(1+|x|)^{-m/2}.$$
		Since $m$ and $k$ are arbitrary, we get that $f\in \mathcal{S}(\R^n)$.
	\end{proof}
	\section{Proof of Corollary \ref{thm:main} and Theorem \ref{thm:vemuri}} 
	\begin{proof}[Proof of Corllary \ref{thm:main}] With the tools we have at our disposal, Corollary \ref{thm:main} becomes a trivial consequence. Indeed, when we are treating the situation in Cowling-Price's uncertainty principle, since the estimates $|f(x)|e^{\alpha\pi|x|^{2}} \in L^p$ and $|\widehat{f}(x)|e^{\beta\pi|x|^{2}} \in L^q$ imply, by Lemma \ref{lemma:schwartz}, that $f\in \mathcal{S}(\R^{n})$, we are in position to apply Lemma \ref{lemma:decay} and obtain for each $\epsilon>0$ the existence of a constant $A_{\epsilon}>0$ such that
		$$
		|f(x)|\le A_{\epsilon}e^{-\pi \alpha(1-\epsilon)|x|^{2}},
		$$
		and the analogous estimate holds for the Fourier transform. The case of the Morgan uncertainty principle is entirely analogous, and thus we are done. 
	\end{proof} 
	
	We now move on to the proof of our main theorem. 	
	
	\begin{proof}[Proof of Theorem \ref{thm:vemuri}] The proof will be divided into several steps. \\
		
		\noindent \textit{Step 1. Translating between the Quantum Harmonic Oscillator and the linear Schr\"odinger equation.} As we saw in the introduction, there is a simple way to write solutions to  write solutions to \eqref{eq:quantum-harmonic} in terms of the Hermite basis. We will use this connection, and the action of the Schr\"odinger evolution, to provide a simple proof of the link between \eqref{eq:quantum-harmonic} and the Schr\"odinger equation. 
		
		Before continuing, we introduce some notation for Hermite eigenfunctions of the Fourier transform in higher dimensions. For a multi-index $(\alpha_1,\alpha_2,\dots,\alpha_n) = \alpha \in \N_0^n$, we define the \emph{Hermite function of order $\alpha$} as 
		$$
		\mathbf{h}_{\alpha}(x) = \Pi_{i = 1}^n h_{\alpha_i}(x_i).
		$$
		We know from \cite[Lemma~11]{Goncalves} that 
		\begin{align*}
			& e^{it\Delta} (\mathbf{h}_{\alpha})(x) = \cr  
			& (1+4\pi i t)^{-n/2} \exp\left[ \frac{4 \pi^2 i t}{1+16\pi^2t^2} |x|^2 \right] \cdot \left( \sqrt{ \frac{1-4\pi i t}{1+4\pi i t}}\right)^{|\alpha|} \mathbf{h}_{\alpha} \left( \frac{x}{\sqrt{1+16\pi^2 t^2}} \right). \cr 
		\end{align*}      	
		Thus, we may write, whenever $f \in \mathcal{S}(\R^n),$ $f(x) = \sum_{\alpha \in \N^n} a_\alpha \mathbf{h}_{\alpha}(x)$,
		\begin{align}\label{eq:correspondence-HO-Schrodinger}
			e^{it\Delta} f(x) & = (1+4\pi i t)^{-n/2} \exp\left[ \frac{4 \pi^2 i t}{1+16\pi^2t^2} |x|^2 \right] \cr 
			& \times \sum_{\alpha \in \N^n} e^{i \arctan(-4 \pi t) |\alpha|} \cdot a_{\alpha} \cdot \mathbf{h}_{\alpha} \left( \frac{x}{\sqrt{1+16\pi^2 t^2}} \right)\cr  
			& = (1+16\pi^2 t^2)^{-n/4} \exp\left[ \frac{4 \pi^2 i t}{1+16\pi^2t^2} |x|^2 \right]  \cdot \Phi\left( \frac{x}{\sqrt{1+16\pi^2 t^2}} , \frac{\arctan(-4\pi t)}{4 \pi} \right). \cr 
		\end{align}
		The correspondence established in \eqref{eq:correspondence-HO-Schrodinger} above will be crucial for the next step. \\
		
		\noindent \textit{Step 2. Using the estimates by Escauriaza--Kenig--Ponce--Vega in order to deduce decay for the solution of the Schr\"odinger equation.} We make use of the translation from the previous step to establish the decay. We follow the overall approach of \cite{Escauriaza20081,Escauriaza2008,Escauriaza2006,Escauriaza2010,Escauriaza2016}. In particular, the proof of Theorem 1 in \cite{Escauriaza2010} yields as a by-product that, if $u$ is a solution to 
		\begin{align*}
			\begin{cases}
				i \partial_t u = - \Delta u & \text{ in } \, \R^n \times \R, \cr 
				u(x,0) = g(x) & \text{ on } \R^n, \cr
			\end{cases}
		\end{align*}
		then the function 
		$$v(x,t) = (it)^{-n/2} e^{-\frac{|x|^2}{4 i t}} \overline{u}(x/t,1/t -1)$$ 
		satisfies 
		\begin{align*}
			\begin{cases}
				i \partial_t v = - \Delta v & \text{ in } \, \R^n \times (0,+\infty), \cr 
				v(x,0) = (4 \pi)^{-n/2}  e^{\frac{-i|x|^2}{4}} \overline{\widehat{g}} (x/4 \pi) & \text{ on } \R^n, \cr
				v(x,1) =  i^{-n/2} e^{-|x|^2/4i} \overline{g}(x) & \text{ on } \R^n. \cr
			\end{cases}
		\end{align*}
		We shall use this fact with $g$ being a suitable dilation of $f.$ 
		
		Indeed, let $g(x) = f \left( \frac{x}{2 \sqrt{\pi}} \right).$ Then we know that $|g(x)| \lesssim e^{-\frac{a |x|^2}{4} }, \, |\widehat{g}(\xi/4\pi)| \lesssim e^{-\frac{a |x|^2}{4} },$ where we put $a = \tanh(2\alpha).$  
		
		For such $g,$ we have that the associated solution $v$ above satisfies $v(x,0), v(x,1)$  $ \, \, \, \in L^2(e^{\frac{a-\eps}{4} |x|^2} \, dx),$ for all $\eps > 0.$ We may now invoke the following result, which first appears in the works of Escauriaza--Kenig--Ponce--Vega \cite[Theorem~3]{Escauriaza2010sharp}.
		
		\begin{lemma}\label{lemma-optimal-gaussian-schrodinger} Assume that $w \in C([0,1], L^2(\R^n))$ satisfies 
			$$ i \partial_t w + \Delta w = 0, \, in \,\,\, \R^n \times [0,1].$$ 
			Then 
			$$ 
			\sup_{t \in [0,1]} \| e^{A(t)|x|^2} w(t)\|_2 \lesssim \|e^{A|x|^2} w(0)\|_2 + \|e^{A|x|^2} w(1)\|_2, 
			$$
			where 
			$$
			A(t) = \frac{R}{2(1+ R^2(2t -1)^2)}, \,\,\,\, A = \frac{R}{2(1+R^2)},
			$$
			and $0 < R < 1$.
		\end{lemma}
		
		We then use Lemma \ref{lemma-optimal-gaussian-schrodinger} with $A = \frac{a-\eps}{4}$. Let $R_{a, \eps}$ be the unique number between $0$ and $1$ such that $\frac{a-\eps}{4} = \frac{R_{a,\eps}}{2(1+R_{a,\eps}^2)}$, and denote the function $A(t)$ thus obtained by $A_{a,\eps}(t)$. We have then 
		$$
		\sup_{t \in [0,1]} \| e^{A_{a,\eps} (t)|x|^2} v(x,t) \|_{L^2(dx)} < + \infty. 
		$$
		Reverting back to $u,$ we find out that 
		$$ 
		\| e^{B_{a,\eps}(s)|x|^2} u(x,s) \|_2 < + \infty, \, \forall \,\, s > 0,
		$$
		where $B_{a,\eps}(s) =\frac{ A_{a,\eps}(1/(s+1)) }{(s+1)^2}. $  Observe that $e^{it\Delta} f(x) = u(2 \sqrt{\pi} x, 4 \pi t)$, and thus we have proven that 
		$$
		\| e^{4 \pi B_{a,\eps}(4 \pi t) |x|^2} e^{it \Delta} f(x)\|_2 < + \infty, \,\, \forall \,\, t>0. 
		$$
		\linebreak
		\noindent \textit{Step 3. Translating back.} Using the correspondence  \eqref{eq:correspondence-HO-Schrodinger} between solutions of the quantum harmonic oscillator and Schr\"odinger's equation, we see that 
		
		$$
		\left\| \exp((1+16 \pi^2 t^2) 4 \pi B_{a,\eps}(4 \pi t) |x|^2) \Phi\left(y, \frac{\arctan(-4\pi t)}{4 \pi} \right) \right\|_2 < + \infty, \, \forall \,\, t>0.
		$$
		Let 
		\begin{align*} 
			\Omega_{a,\eps}(t) := (1+16 \pi^2 t^2) 4 \pi \cdot B_{a,\eps}(4 \pi t) = \frac{(1+16\pi^2 t^2) \cdot 4 \pi \cdot R_{a,\eps}}{2[(4\pi t + 1)^2 +R_{a,\eps}^2(4\pi t -1)^2]}.
		\end{align*}
		Notice however that, as $R_{a,\eps} < 1,$ this function has \emph{exactly} one minimum point, which happens at $t = \frac{1}{4\pi},$ as 
		$$
		\Omega_{a,\eps}(t) - \pi R_{a,\eps} = \frac{\pi R_{a, \eps}(1-R_{a,\eps}^2)(4\pi t-1)^2}{[(4\pi t + 1)^2 +R_{a,\eps}^2(4\pi t -1)^2]}\ge 0.
		$$
		At $t = 1/4\pi,$ we have
		$$
		\Omega_{a,\eps}(1/4\pi) = \pi R_{a,\eps}.
		$$

		\begin{figure}
			\centering
			\begin{tikzpicture}[scale=3][line cap=round,line join=round,>=triangle 45,x=10cm,y=1cm]
				\begin{axis}[
					x=8.5cm,y=1.2cm,
					axis lines=middle,
					xmin=-0.06093005825143929,
					xmax=0.2681770426864066,
					ymin=-0.2067945424206617,
					ymax=1.754684936441864,
					yticklabels={,,},
					xticklabels={,,},]
					\clip(-0.06093005825143929,-1.567945424206617) rectangle (0.2881770426864066,2.054684936441864);
					\draw[line width=0.3pt,color=orange,smooth,samples=100,domain=-0.0005825143929:0.180] plot(\x,{1/4*((3.141592653589793*0.7+0.51*(4*3.141592653589793*(\x)-1)^(2)/(2*((4*3.141592653589793*(\x)+1)^(2)+0.49*(4*3.141592653589793*(\x)-1)^(2))))*(sign((\x))-sign((\x)-1/(4*3.141592653589793)))+(-sign((\x)-1/(2*3.141592653589793))+sign((\x)-1/(4*3.141592653589793)))*(3.141592653589793*0.7+0.51*(4*3.141592653589793*(1/(2*3.141592653589793)-(\x))-1)^(2)/(2*((4*3.141592653589793*(1/(2*3.141592653589793)-(\x))+1)^(2)+0.49*(4*3.141592653589793*(1/(2*3.141592653589793)-(\x))-1)^(2)))))});
					\draw (-0.029,0.91/0.7) node {\fontsize{1.6mm}{5.5cm}\selectfont $a \cdot \pi$};
					\draw (-0, 0.84/0.7) node {\LARGE $\cdot$};
					
					\draw[line width = 0.4 pt, dash pattern = on 1 pt off 1 pt, ] (0,0.768/0.7) -- (0.1585,0.768/0.7);
					\draw (0.18,0.91/0.7) node {\fontsize{1.6mm}{5.5cm}\selectfont $a \cdot \pi$};
					\draw (0.157, 0.84/0.7) node {\LARGE $\cdot$};
					\draw (0.079577,0.76/0.7) node {\LARGE $\cdot$};
					\draw (0.079577,0.68/0.7) node {\fontsize{1.6mm}{5.5cm}\selectfont $R_a \cdot \pi$ };
					\draw (-0.01,-0.135) node {\fontsize{1.6mm}{5.5cm}\selectfont $0$};
					\draw (0.079577,-0.135) node {\fontsize{1.6mm}{5.5cm}\selectfont $1/4 \pi$}; 
					\draw (0.172,-0.135) node {\fontsize{1.6mm}{5.5cm}\selectfont $1/2 \pi$};
					\draw[line width = 1pt] (0.1588,-0.05) -- (0.1588,0.05);
					\draw[line width = 1pt] (0.079577,-0.05) -- (0.079577,0.05);
					\draw (0.24,0.135) node {\fontsize{1.6mm}{5.5cm}\selectfont $t$};
				\end{axis}
			\end{tikzpicture}
			\caption{The limiting function $\Omega_a$ is bounded from below by the curve in \textcolor{orange}{orange} above. The dashed line represents the predicted gain of decay in Vemuri's conjecture.}
		\end{figure}
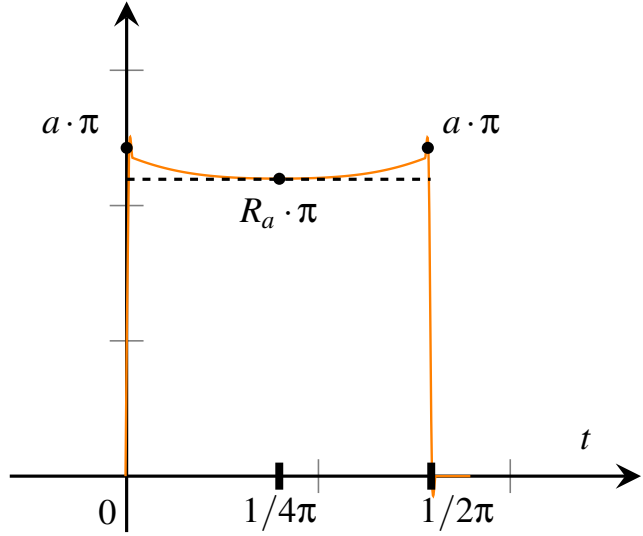

		As $\eps \to 0,$ we readily see that $R_{a,\eps} \to \tanh(\alpha).$ Let then $\Omega_a = \lim_{\eps \to 0} \Omega_{a,\eps}.$ We observe that, for $s \in (-1/16,0)$,  we have 
		$$
		\|\Phi(y,s) \cdot e^{b|y|^2} \|_2 < +\infty, \, \forall b < \Omega_a(-\tan(4 \pi s)).
		$$ 
		As $\Omega_a(-\tan(4 \pi s)) > \pi \tanh(\alpha)$ for $s \in (-1/16,0),$ there is $b(s) > \pi \tanh(\alpha)$ so that 
		\begin{equation}\label{eq:interpolated-gaussian-decay}
			\|\Phi(y,s) \cdot e^{b(s)|y|^2} \|_2 < +\infty.
		\end{equation}
		
		In order to extend this analysis to the rest of the claimed set, we notice that $|\Phi(y,-1/8)| = |\mathcal{F}f(y)|, |\Phi(y,1/8)| = |\mathcal{F}^{-1}f(y)|,$ and so on, so that $\Phi(y,k/8) \cdot \exp(a \pi |y|^2) \in L^{\infty}$ whenever $k \in \Z.$ Moreover, if we let $\Psi$ be a solution of \eqref{eq:quantum-harmonic} with the initial condition $\mathcal{F}^k(f),$ then we have 
		$$ |\Psi(y,t)| = |\Phi(y,t-k/8)|.$$ 
		Using these observations, together with the fact that \eqref{eq:quantum-harmonic} is \emph{time-reversible}, we are able to conclude that, for all $s \in \R\setminus \{\frac{1}{16} + \frac{k}{8}, \, k \in \Z\},$ there is $b(s) > \pi \tanh(\alpha)$ so that \eqref{eq:interpolated-gaussian-decay} holds. \\

		\noindent\textit{Step 4. Conclusion.} Finally, we use our main result in order to conclude. Indeed, by the explicit formula for the solution of \eqref{eq:quantum-harmonic}, we have that $|\mathcal{F}_y \Phi(y,t)| = |\Phi(y,t-1/8)|.$ If $t \not\in \{\frac{1}{16} + \frac{k}{8}, \, k \in \Z\},$ then $t - 1/8 \not\in \{\frac{1}{16} + \frac{k}{8}, \, k \in \Z\},$ and so, for $c(t) = \min\{b(t),b(t-1/8)\},$ we have 
		$$ \Phi(y,t) e^{c(t)|y|^2}, (\mathcal{F}_y\Phi(y,t)) \cdot e^{c(t)|y|^2} \in L^2(\R^n).$$
		From the main theorem, we have $\Phi(y,t) e^{(c(t)-\eps)|y|^2} \in L^{\infty}$ for any $\eps > 0.$ Taking $\eps > 0$ sufficiently small shows that 
		$$ \Phi(y,t) e^{\pi \tanh(\alpha) |y|^2} \in L^{\infty}(\R^n), \, \forall \, t \not\in \{\frac{1}{16} + \frac{k}{8}, \, k \in \Z\}.$$ 
		This finishes the proof of Theorem \ref{thm:vemuri}. 
	\end{proof}

	\begin{remark}
		Observe that the combination of the above lemmas in Section \ref{sec:lemmas} is quite powerful, but in order to use the   {Lemma \ref{lemma:decay}} to generate pointwise control, we do not need to impose that the function is controlled in space and frequency: much weaker estimates are more than enough to ensure the control that we need. This opens the door to understand other kinds of uncertainty principles in a broad range of situations, which we plan to do in future work. 
	\end{remark}
	
	\begin{remark}
		The proof of Theorem \ref{thm:vemuri} highlights that the original conjecture by Vemuri, albeit sharp if one considers the set of \emph{all} times $t \in \R,$ is almost-never sharp for a given time $t \in \R.$ Indeed, we can `upgrade' Vemuri's original conjecture to the following version: 
		
		\begin{conjecture}\label{conjecture:upgraded-vemuri} Let $f \in E_{\tanh(2\alpha)}^{\infty}(n),$ for some $\alpha > 0.$ Then 
			$$
			\Phi(\cdot, t) \in E_{\Omega_{\alpha}(t)}^{\infty}(n) \text{ for all } t \in \R,
			$$
			where we let $\Omega_{\alpha}(t) = \frac{(1+16\pi^2 s^2) \cdot 2 \cdot \tanh(\alpha)}{[(4\pi s + 1)^2 +\tanh(\alpha)^2(4\pi s -1)^2]},$ with $s = -\tan(4 \pi t).$
		\end{conjecture} 
		As we will see in the next section, in the particular cases of Theorem \ref{thm:second-laplace-full}, we are also able to settle this conjecture. This is a strong reason why we believe such a conjecture should be true.
	\end{remark}

	\section{On the endpoint versions of Corollary \ref{thm:main} and Theorem \ref{thm:vemuri}} 
	
	\subsection{Proof of Theorem \ref{thm:Laplace-sharp-1}}  \label{ssec:laplace-transforms-1} 
	
	In order to prove such a result, we start by noticing that, from Corollary \ref{thm:main}, we have that $\varphi \in E^{\infty}_{a-\varepsilon},$ for any $\varepsilon >0.$ We then have the following Lemma on decay of Laplace transforms: 
	
	\begin{lemma}\label{lemma-support-laplace} Let $\mu$ be a finite measure supported on the positive  real line. Suppose that its Laplace transform satisfies $|\mathcal{L} \mu(s)| \le C e^{-c_0 s}, \, \forall s >0,$ for some $C>0,$ $c_0 \in \mathbb{R}_+$. Then $\text{supp}(\mu) \subset [c_0, +\infty).$ 
	\end{lemma}
	
	\begin{proof} To prove this lemma, we define the function 
		$$
		F(z) = e^{-ic_0 z} \mathcal{L}\mu(-iz).
		$$
		Note that, by the definition of the Laplace transform, $F$ is a holomorphic function in the upper half plane $\mathbb{H}.$ Moreover, it has the following properties: 
		\begin{enumerate}
			\item \textit{$F$ is bounded on the real line.} This follows from the fact that $\mathcal{L}\mu(-it)$ is just a (rescaled) Fourer transform of the measure $\mu.$ As $\mu$ is finite,  its Fourier transform is bounded, and the modulation factor $e^{-ic_0 t}$ has absolute value one. 
			\item \textit{$|F(is)| \le C, \, \forall s > 0.$} This follows directly from our decay assumption. 
			\item \textit{$|F(z)| \le \tilde{C} e^{c_0|z|},$ for some $\tilde{C}> 0.$ } This follows again by the fact that $\mathcal{L}\mu$ is uniformly bounded on the upper half space. 
		\end{enumerate} 
		With these properties at hand, we are able to use the Phragm\'en--Lindel\"of principle in the first and second quadrants separately. This implies that $F$ is \emph{bounded and continuous} in $\mathbb{H}.$ 
		
		Thus, $F$ may be written as a Poisson integral of its boundary values. In particular, by the Young's convolution inequality, we have 
		\begin{align*}
			\| F(\cdot + iy) \|_{L^2(dx)} = \| (F|_{\R}) * P_y(x) \|_{L^2(dx)} \le \|F|_{\R} \|_2. 
		\end{align*}
		This inequality only holds, of course, if $F|_{\R} \in L^2.$ For now, let us assume that $d \mu = f(x) \, dx,$ with $f \in L^1 \cap L^2.$ Then the computation above shows us that $F \in H^2(\mathbb{H})$ (the Hardy space on the upper half space). In particular, by the Paley--Wiener theorem, we must have that $F|_{\R} = \widehat{h},$ where $\text{supp}(h) \subset (0,+\infty).$ 
		
		On the other hand, we see that $F|_{\R}$ may be written as a (rescaled) Fourier transform of $f(x+c_0) 1_{x+c_0 > 0}.$ Thus, $f(x+c_0)\ne0$ only if $x \ge 0,$ and so $f(y) \ne 0$ only if $y \ge c_0.$ This concludes the proof in the case where $d \mu = f(x) dx, \, f \in L^1 \cap L^2.$ \\
		
		For the general case, consider a smooth, positive compactly supported function $\phi$ so that $\text{supp}(\phi)$ is contained in the positive real line $(0,+\infty),$ and $\int \phi = 1.$ Let then $ \mu_{\eps}(x) = (d\mu) * \phi_{\eps} (x),$ where we define $\phi_{\eps}(y) = \frac{1}{\eps} \phi\left( \frac{y}{\eps}\right).$ 
		
		By the Young's inequality, we have $\mu_{\eps} \in L^1 \cap L^2(\R).$ Moreover, $\mathcal{L} \mu_{\eps} = (\mathcal{L} \mu) \cdot (\mathcal{L} \phi_{\eps})$ by the definition of $\phi.$ As $|\mathcal{L} f(s)| \le \|f\|_1$ uniformly on $s \in \{z \in \C \colon \text{Re}(z) \ge 0\},$ we have that 
		$$
		|\mathcal{L} \mu_{\eps} (s)| \le C e^{-c_0 s},
		$$
		uniformly on $\eps > 0.$ Thus, $\text{supp}(\mu_{\eps}) \subset [c_0,+\infty).$ As $\mu_{\eps} \stackrel{\ast}{\rightharpoonup} d \mu$ in the space of finite measures on the real line, we see that $\text{supp}(\mu) \subset [c_0,+\infty),$ as desired. 
	\end{proof}
	
	We are now ready to finish the proof of Theorem \ref{thm:Laplace-sharp-1}. 
	
	\begin{proof}[Proof of Theorem \ref{thm:Laplace-sharp-1}] First we notice that, by our main result, $\varphi \in E^{\infty}_{a-\eps}, \, \forall \eps> 0.$ This implies that $|\mathcal{L}\mu(s)| \lesssim_{\eps} e^{-(a-\eps)s},\, \forall s >0.$ By Lemma \ref{lemma-support-laplace}, we conclude that $\text{supp}(\mu) \subset [a-\eps,+\infty), \, \, \forall \eps > 0.$ This plainly implies that $\supp(\mu) \subset [a,+\infty), $ which in turn implies that $|\varphi(x)| \lesssim e^{-\pi a |x|^2}.$ By observing that the Fourier transform 
		$$
		\widehat{\varphi}(\xi) = \int_0^{\infty} \frac{1}{t^{1/2}} e^{-\frac{\pi}{t} |\xi|^2} \, \, d \mu(t) =: \int_0^{\infty} e^{-r \pi |\xi|^2} d\nu(r)
		$$
		also satisfies that $|\nu|(\R_+) \le \int_a^{\infty} \frac{1}{t^{1/2}} \, |d \mu|(t) < + \infty,$  we may employ the same reasoning to conclude that also $|\widehat{\varphi}(\xi)| \lesssim e^{-a\pi|\xi|^2}.$ This finishes the proof.
	\end{proof}
	
	The above results lead us to the following question: \emph{if $f\in E^{2}_{a}$, does it then follow that $f\in E^{\infty}_{a}$?} We are led to speculate that such a question has an affirmative answer based on the previous theorem, and thus we believe that we should have a control in the sub-critical regime of the uncertainty principles \emph{without} the $\epsilon$-loss that is present in the  {Theorem \ref{thm:main}}. 
	
	In view of the considerations above, one may may wonder whether the class of Laplace transforms presented in Subsection \ref{ssec:laplace-transforms-1} represents the almost sharp rate of decay obtained in Theorem \ref{thm:vemuri}. In order to analyse that, we need to introduce the following concept. 
	
	For $\beta \in \R,$ we define the \emph{fractional Fourier transform of order $\beta$} to act on the Hermite functions as 
	$$ 
	\mathcal{F}_{\beta} h_k = e^{-ik\beta} h_k,
	$$
	and extended it to $L^2$ in the canonical way. By the properties of the Hermite polynomials and the Mehler kernel \cite{Beckner}, one is led to deduce that these transforms have the following representation as \emph{integral} transforms: 
	\begin{equation}\label{eq:FrFT}
		\mathcal{F}_{\beta} f(x) = \frac{e^{i (\theta(\beta) \pi/2 - \beta/2)}}{\sqrt{|\sin(\beta)|}} e^{i\pi x^2 \cot(\beta)} \int_{\R} e^{-2 \pi i ( xy \csc(\beta) - y^2 \cot(\beta)/2)} f(y) \, dy,
	\end{equation}
	where $\theta(\beta) = \text{sign}(\sin (\beta)).$  The relationship between fractional Fourier transforms and the evolution of the quantum harmonic oscillator is evident from the definition. Indeed, we may write 
	$$
	\Phi(y,t) = e^{2 \pi i t} (\mathcal{F}_{-4 \pi t} f)(y). 
	$$
	The key feature of this definition is the relationship with \eqref{eq:FrFT}, which allows us to compute easily fractional Fourier transforms of Gaussians and related functions. 
	
	As a first observation, notice that for $f(x) = e^{-\lambda \pi |x|^2},$ the integral in \eqref{eq:FrFT} is just the Fourier transform of $e^{-\pi y^2(\lambda + i \cot(\beta))}$ evaluated at the point $x \csc(\beta).$ This in turn evaluates directly to 
	\begin{equation}\label{eq:gaussian-fractional} 
		\frac{1}{(\lambda + i\cot(\beta))^{1/2}} e^{- \frac{\pi \csc^2(\beta) |x|^2}{\lambda + i \cot(\beta)} }= \frac{1}{(\lambda + i\cot(\beta))^{1/2}}  e^{ - \pi |x|^2 \frac{\lambda \csc^2(\beta) - i \csc^2(\beta)\cot(\beta) }{\lambda^2 + \cot^2(\beta)}}.
	\end{equation}
	Now let $\varphi(x) = \mathcal{L}\mu(\pi |x|^2),$ with $d\mu$ a finite measure. If we have $\varphi \in E^{\infty}_a,$ then Theorem \ref{thm:Laplace-sharp-1} tells us that $\supp(\mu) \subset [a,1/a].$ The computation of the fractional Fourier transform of the Gaussian above shows that 
	\begin{align}\label{eq:bound-FrFT-Laplace} 
		|\mathcal{F}_{\beta} \varphi (x)| & \lesssim \int_a^{1/a} e^{ - \pi |x|^2 \frac{\lambda \csc^2(\beta)  }{\lambda^2 + \cot^2(\beta)}} \, |d \mu|(\lambda) \cr 
		& \lesssim \|\mu\|_{TV} \max\{ e^{ - \pi |x|^2 \frac{a \csc^2(\beta)  }{a^2 + \cot^2(\beta)}},  e^{ - \pi |x|^2 \frac{a \csc^2(\beta)  }{1 + a\cot^2(\beta)}} \}. \cr
	\end{align}
	For $a < 1,$ we can see that 
	$$ 
	\min\left\{ \frac{a \csc^2(\beta)  }{a^2 + \cot^2(\beta)}, \frac{a \csc^2(\beta)  }{1 + a\cot^2(\beta)}\right\} \ge a,
	$$
	with equality if and only if $\beta = k\pi/2,\,\, k \in \Z.$ Thus, \eqref{eq:bound-FrFT-Laplace} implies that there is $\vartheta(\beta) > a$ whenever $ \beta \neq k\pi/2, \,\, k \in \Z,$ so that  $$|\mathcal{F}_{\beta}\varphi(x)| \lesssim \|\mu\|_{TV} e^{- \pi \vartheta(\beta)|x|^2}.$$ 
	Thus, by relating the fractional Fourier transform to the quantum harmonic oscillator, we obtain much stronger version of Vemuri's conjecture when $\varphi(x) = \mathcal{L}\mu(\pi |x|^2).$

	\subsection{Proof of Theorem \ref{thm:second-laplace-full}} 
	
	The proof of Theorem \ref{thm:second-laplace-full} is based on the following result on support of measures on the circle with rapidly decaying Laplace transform. This, on the other hand, is an analogue  of Lemma \ref{lemma-support-laplace} on the circle. 
	
	\begin{lemma}\label{lemma-support-measure-circle} Let $\nu$ be a finite measure on the \emph{circle} $\mathbb{S}^1\subset \C.$ Suppose that the Laplace transform 
		$$ 
		\mathcal{L}\nu(t) = \int_{\mathbb{S}^1} e^{-z t} \, d \nu(z)
		$$
		satisfies $|\mathcal{L}\nu(t)| \le C e^{-c_0 t}$ for some $C>0$, $c_0 \in \R_+.$ Suppose additionally that there is $\delta > 0$ so that $\supp(\nu) \subset \mathbb{S}^1 \cap \{z \colon \text{Re}(z) > -1 + \delta\}$  (or, equivalently, $-1\notin \supp (\nu)$). Then 
		$$ \supp(\nu) \subset \mathbb{S}^1 \cap \{z \colon \text{Re}(z) \ge c_0\}. $$
	\end{lemma}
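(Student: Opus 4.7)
The plan is to relate the decay of $\mathcal{L}\nu$ to an analytic extension of the Cauchy transform of $\nu$, and then invoke the classical distributional identity $\bar\partial C_\nu \propto \nu$ to force the measure to vanish off a right half-plane.

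First, I would introduce the Cauchy transform
$$
C_\nu(w) := \int_{\mathbb{S}^1}\frac{d\nu(z)}{z-w},
$$
which is holomorphic on $\C\setminus\supp(\nu)$, together with the auxiliary function $L(w) := \int_0^\infty e^{wt}\mathcal{L}\nu(t)\,dt$. For $\text{Re}(w) < -1+\delta$, the support hypothesis provides a uniform lower bound for $\text{Re}(z)-\text{Re}(w)$ on $\supp(\nu)$, so Fubini yields $L(w) = C_\nu(w)$ on this initial half-plane. Crucially, the decay assumption $|\mathcal{L}\nu(t)|\le Ce^{-c_0 t}$ makes the integral defining $L$ converge absolutely for every $\text{Re}(w) < c_0$, so $L$ is in fact holomorphic on the larger half-plane $\{\text{Re}(w)<c_0\}$.

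Next, I would upgrade the identity $L=C_\nu$ to the full open set $\Omega := \{\text{Re}(w)<c_0\}\setminus\supp(\nu)$. The key observation here is that $\Omega$ is connected: because $\supp(\nu)$ avoids an open neighborhood of $z=-1$ on $\mathbb{S}^1$, one can join the ``inside-the-unit-disk'' part of $\Omega$ to its ``outside-the-unit-disk'' part by a continuous path that crosses $\mathbb{S}^1$ through the gap of $\supp(\nu)$ near $-1$, while staying inside $\{\text{Re}(w)<c_0\}$. Since $L$ and $C_\nu$ are both holomorphic on $\Omega$ and coincide on a nonempty open subset, analytic continuation then propagates $L=C_\nu$ throughout $\Omega$.

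Finally, because $\supp(\nu)\subset\mathbb{S}^1$ has two-dimensional Lebesgue measure zero, the equality $L=C_\nu$ on $\Omega$ lifts to an equality of distributions on the entire half-plane $\{\text{Re}(w)<c_0\}$. Applying $\bar\partial_w$, the holomorphicity of $L$ gives $\bar\partial L = 0$ there, while the classical distributional identity $\bar\partial_w C_\nu = -\pi\nu$ then forces $\nu$ to vanish on $\{\text{Re}(w)<c_0\}$. This yields $\supp(\nu)\subset\mathbb{S}^1\cap\{\text{Re}(z)\ge c_0\}$, as required. The step that needs genuine care, rather than routine computation, is verifying the connectedness of $\Omega$; this is exactly the place where the support hypothesis is used, and without it the above analytic continuation step could fail.
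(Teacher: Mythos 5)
Your proof is correct, and its final step is genuinely different from the paper's. Both proofs share the same opening: define the one-sided Laplace transform $L(w)=\int_0^\infty e^{wt}\mathcal{L}\nu(t)\,dt$ on $\{\operatorname{Re} w<c_0\}$, identify it with the Cauchy transform $C_\nu$ via Fubini on a small region, and extend the identity to all of $\Omega=\{\operatorname{Re} w<c_0\}\setminus\supp(\nu)$ via connectedness (which uses the gap in $\supp(\nu)$ near $-1$). From there the paper takes a classical route: it restricts $\nu$ to $\tilde\nu=\nu|_{\{\operatorname{Re} z\le c_0\}}$, shows $F_{\tilde\nu}$ extends to an entire function apart from two poles with linear growth, deduces that $F_{\tilde\nu}$ is a rational function with poles only at $c_0\pm i\sqrt{1-c_0^2}$, and then invokes the F.\ and M.\ Riesz theorem together with Hardy-space uniqueness to eliminate the absolutely continuous part. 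You instead observe that since $\supp(\nu)$ has planar measure zero, $L=C_\nu$ a.e.\ on the half-plane, hence as distributions, so holomorphicity of $L$ together with $\bar\partial C_\nu=-\pi\nu$ immediately kills $\nu$ on $\{\operatorname{Re} w<c_0\}$. This is shorter and avoids the growth estimates, the delicate geometric case analysis around $z_0,\overline{z_0}$, and the appeal to F.\ and M.\ Riesz entirely; the paper's argument, in exchange, yields the explicit rational form of $F_{\tilde\nu}$ (hence the exact singular part of the measure) and stays within classical Hardy-space machinery rather than distributional $\bar\partial$-calculus. The one point you correctly flag as needing care --- connectedness of $\Omega$ --- is precisely where the hypothesis $\supp(\nu)\subset\{\operatorname{Re} z>-1+\delta\}$ is used, exactly as in the paper.
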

	
	\begin{proof} As $\mathcal{L}\nu$ is defined and decays as $e^{-c_0 t}$ on the positive half-line, we may take its (real line) Laplace transform, which we will denote by 
		$$
		F(s) = \int_0^{\infty} e^{st} \mathcal{L}\nu(t) \, dt. 
		$$
		By the decay of $\mathcal{L}\nu,$ $F$ is well-defined and holomorphic on the half-space $\{s \colon \text{Re}(s) < c_0 \}.$ Moreover, $F$ obeys the bound 
		\begin{equation}\label{eq:first-bound-F}
			|F(s)| \le C |c_0 - \text{Re}(s)|^{-1}, \,\, \text{ whenever } \text{Re}(s) < c_0.
		\end{equation}
		On the other hand, by Fubini's Theorem, we have the representation 
		\begin{equation}\label{eq:Cauchy-transform} 
			F(s) = \int_{\mathbb{S}^1} \frac{1}{z-s} \, d \nu(z), \,\, \text{ whenever } |s| < 1.
		\end{equation}
		The right-hand side of \eqref{eq:Cauchy-transform} can be further extended as an analytic function whenever $s \not \in \supp(\nu)$, as the set $\C \setminus \supp(\nu)$ is \emph{connected}  thanks to the additional hypothesis on the support of $\nu$. Thus, $\int_{\mathbb{S}^1} \frac{1}{z-s} \, d \nu(z)$ must agree with $F(s)$ on the intersection between $\mathbb{C} \setminus \supp(\nu)$ and $\{\text{Re}(s) < c_0\}.$ We will also denote by $F(s)$ the analytic function that continues over the union of both sets above. 
		
		Notice that, by this definition, we also have 
		\begin{equation}\label{eq:second-bound-F}
			|F(s)| \lesssim \text{dist}(s,\supp(\nu))^{-1}.
		\end{equation}
		
		In order to finish, we observe that we may replace the measure $\nu$ by $\tilde{\nu} = \nu|_A,$ where $A = \{ z \in \mathbb{S}^1 \colon \text{Re}(z) \le c_0 \},$ in each of the steps above. Let $F_{\tilde{\nu}}$ be the function constructed in association with it. Then:
		\begin{enumerate}
			\item $F_{\tilde{\nu}}$ is well-defined and holomorphic on $\C\setminus\left\{c_0 \pm i \sqrt{1-c_0^2}\right\};$ 
			\item $|F_{\tilde{\nu}}(s)| \lesssim \left|s-(c_0 + i\sqrt{1-c_0^2})\right|^{-1} + \left|s-(c_0 - i\sqrt{1-c_0^2})\right|^{-1}, \, \forall s \in \C.$ \\
			
			Indeed, If $\text{Re}(s) \le 0,$ then the claim is trivial in light of \eqref{eq:first-bound-F}. More generally, the claim follows by either \eqref{eq:first-bound-F}, \eqref{eq:Cauchy-transform} or \eqref{eq:second-bound-F} whenever 
			$$\text{dist}(s,\supp({\tilde{\nu}})) \ge B,$$
			with $B>0$ an absolute constant to be determined later. Thus, we may restrict ourselves to $\text{Re}(s) > 0, \text{dist}(s,\supp{\tilde{\nu}}) < B.$ Let $c_0 + i\sqrt{1-c_0^2} = z_0,$ for shortness. 
			
			Consider first the region $R_1 = \{ s = z_0 + w, \text{Re}(s) > 0, \text{Im}(w) > \frac{(c_0+1)}{2} |w| \}.$ In that region, the angle between $w$ and $z_0$ is always \emph{strictly} less than $\pi/2,$ and thus we have 
			$$ |s|^2 = 1 + |w|^2 + 2 \langle z_0, w \rangle \ge 1 + C(c_0) |w|,$$
			where we may write, in more explicit terms, 
			$$C(c_0) = \sqrt{1-c_0^2} \frac{c_0 + 1}{2} - \left( 1 - \frac{(c_0+1)^2}{4}\right)^{1/2} c_0 \ge \sqrt{1-c_0^2} \frac{1-c_0}{2}.$$
			Therefore, $|s|-1 \gtrsim |w| = |s - z_0|,$ and as $|s|-1 = \text{dist}(s,\mathbb{S}^1)$ for $s \in R_1,$ we have the claim in that region from \eqref{eq:second-bound-F}. Analogously, if we consider the region 
			$$R_2 = \left\{ s = z_0 + w, \text{Re}(s) > 0, \text{Im}(w) <- \frac{c_0+1}{2} |w|, |w| \ll 1 \right\},$$
			we see that $|s|^2 \le 1 - \kappa(c_0) |w|,$ and thus $\text{dist}(s,\mathbb{S}^1) = 1 - |s| \gtrsim |w|,$ and the conclusion follows in the same manner. 
			
			Now, if we let
			$$R_3 = \left\{ s = z_0 + w, c_0 > \text{Re}(s) > 0, \text{Im}(w) \in \left(-\frac{c_0+1}{2}|w|, \frac{c_0+1}{2}|w|\right)\right\},$$ 
			we have $|\text{Re}(w)| > \left(1 - \frac{(c_0+1)^2}{4}\right)^{1/2} |w|.$ In particular, $|\text{Re}(s)-c_0| = |\text{Re}(w)| \gtrsim |w|,$ and \eqref{eq:first-bound-F} gives us the result once again. On the other hand, the estimate in the region $R_4 = \left\{ s = z_0 + w,  \text{Re}(s) > c_0,  \text{Im}(w) \in \left(-\frac{c_0+1}{2}|w|, \frac{c_0+1}{2}|w|\right)\right\}$ follows directly from \eqref{eq:second-bound-F} and the fact that $\supp(\tilde{\nu}) \subset \mathbb{S}^1 \cap \{\text{Re}(s) \le c_0 \}.$ 
			
			By repeating the same process above, but reflected, to the point $\overline{z_0}$ shows the result in a neighbourhood of size $\delta(c_0) > 0$ of $\text{supp}(\tilde{\nu}).$ Let then $B = \delta(c_0)$ in the beginning. This proves the claim.

			\definecolor{qqqqff}{rgb}{0,0,1}
			\definecolor{zzffqq}{rgb}{0.6,1,0}
			\definecolor{ffffzz}{rgb}{1,1,0.6}
			\definecolor{ffqqqq}{rgb}{1,0,0}
			\definecolor{uuuuuu}{rgb}{0.26666666666666666,0.26666666666666666,0.26666666666666666}
			
			\begin{figure} 
				\centering
				\begin{tikzpicture}[scale=1.5][line cap=round,line join=round,>=triangle 45,x=1cm,y=1cm]
					\begin{axis}[
						x=1cm,y=1cm,
						axis lines=middle,
						xmin=-2.106171796401762,
						xmax=3.06926886601463,
						ymin=-2.383137726113157,
						ymax=3.2561395994443574,
						xtick={-2,-1.5,...,3},
						ytick={-2,-1.5,...,3},yticklabels={,,},
						xticklabels={,,},]
						\clip(-2.106171796401762,-2.383137726113157) rectangle (3.06926886601463,3.2561395994443574);
						\draw [line width=0.5pt] (0,0) circle (1cm);
						\draw [line width=0.5pt] (0.7,-2.383137726113157) -- (0.7,3.2561395994443574);
						\draw[line width=0pt,dash pattern=on 1pt off 1pt,color=ffqqqq,fill=ffqqqq,fill opacity=0.25](0,3.2561395994443574)--(0,2.5321)--(0.0019454938612041542,2.527035879479286)--(0.02279779998925762,2.4727573266279625)--(0.04365010611731155,2.4184787737766387)--(0.06450241224536456,2.3642002209253157)--(0.08535471837341801,2.3099216680739922)--(0.10620702450147149,2.2556431152226697)--(0.12705933062952496,2.201364562371346)--(0.1479116367575784,2.1470860095200237)--(0.16876394288563187,2.0928074566687)--(0.18961624901368535,2.0385289038173773)--(0.21046855514173882,1.9842503509660536)--(0.2313208612697923,1.929971798114731)--(0.25217316739784573,1.8756932452634074)--(0.27302547352589923,1.8214146924120838)--(0.2938777796539527,1.7671361395607612)--(0.3147300857820061,1.7128575867094373)--(0.3355823919100596,1.658579033858115)--(0.35643469803811306,1.6043004810067911)--(0.37728700416616656,1.5500219281554686)--(0.39813931029422,1.495743375304145)--(0.41899161642227345,1.4414648224528224)--(0.43984392255032695,1.3871862696014987)--(0.4606962286783804,1.3329077167501762)--(0.48154853480643384,1.2786291638988525)--(0.5024008409344873,1.2243506110475288)--(0.5232531470625408,1.1700720581962063)--(0.5441054531905942,1.1157935053448824)--(0.5649577593186478,1.0615149524935599)--(0.5858100654467012,1.0072363996422364)--(0.6066623715747547,0.9529578467909139)--(0.6275146777028081,0.89867929393959)--(0.6483669838308616,0.8444007410882675)--(0.6692192899589151,0.7901221882369436)--(0.6900715960869686,0.735843635385621)--(0.7004977491509953,0.711295641040041)--(0.7031042874170019,0.7180804601464558)--(0.7057108256830086,0.7248652792528711)--(0.710923902215022,0.7384349174657022)--(0.7317762083430754,0.7927134703170261)--(0.7526285144711289,0.8469920231683487)--(0.7734808205991823,0.9012705760196721)--(0.7943331267272359,0.9555491288709946)--(0.8151854328552893,1.0098276817223186)--(0.8360377389833428,1.0641062345736412)--(0.8568900451113962,1.1183847874249646)--(0.8777423512394497,1.1726633402762872)--(0.8985946573675032,1.226941893127611)--(0.9194469634955567,1.2812204459789336)--(0.9402992696236101,1.3354989988302572)--(0.9611515757516635,1.389777551681581)--(0.982003881879717,1.4440561045329035)--(1.0028561880077704,1.498334657384227)--(1.0237084941358239,1.5526132102355497)--(1.0445608002638775,1.6068917630868733)--(1.065413106391931,1.6611703159381959)--(1.0862654125199844,1.7154488687895195)--(1.1071177186480379,1.769727421640842)--(1.1279700247760913,1.824005974492166)--(1.1488223309041448,1.8782845273434896)--(1.1696746370321982,1.9325630801948122)--(1.1905269431602516,1.9868416330461358)--(1.211379249288305,2.0411201858974586)--(1.2322315554163585,2.095398738748782)--(1.253083861544412,2.1496772916001046)--(1.2739361676724656,2.2039558444514284)--(1.294788473800519,2.2582343973027506)--(1.3156407799285725,2.3125129501540744)--(1.336493086056626,2.366791503005397)--(1.3573453921846794,2.421070055856721)--(1.3781976983127329,2.4753486087080443)--(1.3990500044407863,2.5296271615593673)--(1.4199023105688398,2.5839057144106907)--(1.4407546166968932,2.6381842672620133)--(1.4616069228249466,2.692462820113337)--(1.4824592289530003,2.7467413729646593)--(1.5033115350810538,2.801019925815983)--(1.5241638412091072,2.8552984786673057)--(1.5450161473371606,2.9095770315186296)--(1.565868453465214,2.963855584369952)--(1.5867207595932675,3.0181341372212755)--(1.607573065721321,3.0724126900725994)--(1.6284253718493744,3.126691242923922)--(1.6492776779774279,3.1809697957752454)--(1.6701299841054813,3.235248348626568)--(1.6781558199939908,3.2561395994443574);
						\draw[line width=0pt,dash pattern=on 1pt off 1pt,color=ffqqqq,fill=ffqqqq,fill opacity=0.25](0,3.2561395994443574)--(0,2.5321)--(0.0019454938612041542,2.527035879479286)--(0.02279779998925762,2.4727573266279625)--(0.04365010611731155,2.4184787737766387)--(0.06450241224536456,2.3642002209253157)--(0.08535471837341801,2.3099216680739922)--(0.10620702450147149,2.2556431152226697)--(0.12705933062952496,2.201364562371346)--(0.1479116367575784,2.1470860095200237)--(0.16876394288563187,2.0928074566687)--(0.18961624901368535,2.0385289038173773)--(0.21046855514173882,1.9842503509660536)--(0.2313208612697923,1.929971798114731)--(0.25217316739784573,1.8756932452634074)--(0.27302547352589923,1.8214146924120838)--(0.2938777796539527,1.7671361395607612)--(0.3147300857820061,1.7128575867094373)--(0.3355823919100596,1.658579033858115)--(0.35643469803811306,1.6043004810067911)--(0.37728700416616656,1.5500219281554686)--(0.39813931029422,1.495743375304145)--(0.41899161642227345,1.4414648224528224)--(0.43984392255032695,1.3871862696014987)--(0.4606962286783804,1.3329077167501762)--(0.48154853480643384,1.2786291638988525)--(0.5024008409344873,1.2243506110475288)--(0.5232531470625408,1.1700720581962063)--(0.5441054531905942,1.1157935053448824)--(0.5649577593186478,1.0615149524935599)--(0.5858100654467012,1.0072363996422364)--(0.6066623715747547,0.9529578467909139)--(0.6275146777028081,0.89867929393959)--(0.6483669838308616,0.8444007410882675)--(0.6692192899589151,0.7901221882369436)--(0.6900715960869686,0.735843635385621)--(0.7004977491509953,0.711295641040041)--(0.7031042874170019,0.7180804601464558)--(0.7057108256830086,0.7248652792528711)--(0.710923902215022,0.7384349174657022)--(0.7317762083430754,0.7927134703170261)--(0.7526285144711289,0.8469920231683487)--(0.7734808205991823,0.9012705760196721)--(0.7943331267272359,0.9555491288709946)--(0.8151854328552893,1.0098276817223186)--(0.8360377389833428,1.0641062345736412)--(0.8568900451113962,1.1183847874249646)--(0.8777423512394497,1.1726633402762872)--(0.8985946573675032,1.226941893127611)--(0.9194469634955567,1.2812204459789336)--(0.9402992696236101,1.3354989988302572)--(0.9611515757516635,1.389777551681581)--(0.982003881879717,1.4440561045329035)--(1.0028561880077704,1.498334657384227)--(1.0237084941358239,1.5526132102355497)--(1.0445608002638775,1.6068917630868733)--(1.065413106391931,1.6611703159381959)--(1.0862654125199844,1.7154488687895195)--(1.1071177186480379,1.769727421640842)--(1.1279700247760913,1.824005974492166)--(1.1488223309041448,1.8782845273434896)--(1.1696746370321982,1.9325630801948122)--(1.1905269431602516,1.9868416330461358)--(1.211379249288305,2.0411201858974586)--(1.2322315554163585,2.095398738748782)--(1.253083861544412,2.1496772916001046)--(1.2739361676724656,2.2039558444514284)--(1.294788473800519,2.2582343973027506)--(1.3156407799285725,2.3125129501540744)--(1.336493086056626,2.366791503005397)--(1.3573453921846794,2.421070055856721)--(1.3781976983127329,2.4753486087080443)--(1.3990500044407863,2.5296271615593673)--(1.4199023105688398,2.5839057144106907)--(1.4407546166968932,2.6381842672620133)--(1.4616069228249466,2.692462820113337)--(1.4824592289530003,2.7467413729646593)--(1.5033115350810538,2.801019925815983)--(1.5241638412091072,2.8552984786673057)--(1.5450161473371606,2.9095770315186296)--(1.565868453465214,2.963855584369952)--(1.5867207595932675,3.0181341372212755)--(1.607573065721321,3.0724126900725994)--(1.6284253718493744,3.126691242923922)--(1.6492776779774279,3.1809697957752454)--(1.6701299841054813,3.235248348626568)--(1.6781558199939908,3.2561395994443574);
						\draw[line width=0pt,dash pattern=on 1pt off 1pt,color=ffffzz,fill=ffffzz,fill opacity=0.25](0.7004977491509953,0.7087043589599589)--(0.6900715960869686,0.6841563646143788)--(0.6692192899589151,0.6298778117630562)--(0.6483669838308616,0.5755992589117324)--(0.6275146777028081,0.5213207060604098)--(0.6066623715747547,0.4670421532090864)--(0.5858100654467012,0.4127636003577638)--(0.5649577593186478,0.35848504750643995)--(0.5441054531905942,0.3042064946551174)--(0.5232531470625408,0.2499279418037935)--(0.5024008409344873,0.19564938895247097)--(0.48154853480643384,0.1413708361011475)--(0.4606962286783804,0.0870922832498236)--(0.43984392255032695,0.03281373039850108)--(0.41899161642227345,-0.021464822452822375)--(0.39813931029422,-0.0757433753041449)--(0.37728700416616656,-0.1300219281554688)--(0.35643469803811306,-0.18430048100679133)--(0.3355823919100596,-0.2385790338581148)--(0.3147300857820061,-0.2928575867094373)--(0.2938777796539527,-0.3471361395607612)--(0.27302547352589923,-0.4014146924120838)--(0.25217316739784573,-0.45569324526340765)--(0.2313208612697923,-0.5099717981147311)--(0.21046855514173882,-0.5642503509660536)--(0.18961624901368535,-0.6185289038173771)--(0.16876394288563187,-0.6728074566686996)--(0.1479116367575784,-0.7270860095200236)--(0.12705933062952496,-0.781364562371346)--(0.10620702450147149,-0.83564311522267)--(0.08535471837341801,-0.8899216680739925)--(0.06450241224536456,-0.944200220925316)--(0.04365010611731155,-0.9984787737766385)--(0.02279779998925762,-1.052757326627962)--(0.0019454938612041542,-1.1070358794792863)--(0,-1.1121)--(0,-2.383137726113157)--(1.8882972439927612,-2.383137726113157)--(1.878653045386016,-2.3580338771397997)--(1.8578007392579625,-2.303755324288477)--(1.836948433129909,-2.249476771437155)--(1.8160961270018552,-2.1951982185858294)--(1.7952438208738022,-2.140919665734507)--(1.7743915147457487,-2.0866411128831843)--(1.7535392086176953,-2.03236256003186)--(1.7326869024896419,-1.9780840071805377)--(1.7118345963615884,-1.9238054543292142)--(1.690982290233535,-1.8695269014778917)--(1.6701299841054813,-1.8152483486265683)--(1.6492776779774279,-1.7609697957752457)--(1.6284253718493744,-1.7066912429239223)--(1.607573065721321,-1.6524126900725997)--(1.5867207595932675,-1.5981341372212754)--(1.565868453465214,-1.543855584369952)--(1.5450161473371606,-1.4895770315186294)--(1.5241638412091072,-1.435298478667306)--(1.5033115350810538,-1.3810199258159834)--(1.4824592289530003,-1.3267413729646589)--(1.4616069228249466,-1.2724628201133366)--(1.4407546166968932,-1.2181842672620131)--(1.4199023105688398,-1.1639057144106906)--(1.3990500044407863,-1.1096271615593671)--(1.3781976983127329,-1.0553486087080446)--(1.3573453921846794,-1.0010700558567212)--(1.336493086056626,-0.9467915030053967)--(1.3156407799285725,-0.8925129501540746)--(1.294788473800519,-0.8382343973027507)--(1.2739361676724656,-0.7839558444514282)--(1.253083861544412,-0.7296772916001047)--(1.2322315554163585,-0.6753987387487822)--(1.211379249288305,-0.6211201858974583)--(1.1905269431602516,-0.5668416330461358)--(1.1696746370321982,-0.5125630801948123)--(1.1488223309041448,-0.4582845273434898)--(1.1279700247760913,-0.4040059744921659)--(1.1071177186480379,-0.349727421640842)--(1.0862654125199844,-0.2954488687895195)--(1.065413106391931,-0.24117031593819602)--(1.0445608002638775,-0.18689176308687347)--(1.0237084941358239,-0.13261321023554956)--(1.0028561880077704,-0.07833465738422704)--(0.982003881879717,-0.02405610453290359)--(0.9611515757516635,0.030222448318418933)--(0.9402992696236101,0.08450100116974285)--(0.9194469634955567,0.1387795540210663)--(0.8985946573675032,0.19305810687238883)--(0.8777423512394497,0.24733665972371274)--(0.8568900451113962,0.30161521257503526)--(0.8360377389833428,0.3558937654263587)--(0.8151854328552893,0.41017231827768125)--(0.7943331267272359,0.4644508711290051)--(0.7734808205991823,0.5187294239803276)--(0.7526285144711289,0.5730079768316516)--(0.7317762083430754,0.6272865296829742)--(0.710923902215022,0.6815650825342976)--(0.7057108256830086,0.6951347207471287)--(0.7031042874170019,0.7019195398535439)--(0.7004977491509953,0.7087043589599589);
						\draw[line width=0pt,dash pattern=on 1pt off 1pt,color=ffffzz,fill=ffffzz,fill opacity=0.25](0.7004977491509953,0.7087043589599589)--(0.6900715960869686,0.6841563646143788)--(0.6692192899589151,0.6298778117630562)--(0.6483669838308616,0.5755992589117324)--(0.6275146777028081,0.5213207060604098)--(0.6066623715747547,0.4670421532090864)--(0.5858100654467012,0.4127636003577638)--(0.5649577593186478,0.35848504750643995)--(0.5441054531905942,0.3042064946551174)--(0.5232531470625408,0.2499279418037935)--(0.5024008409344873,0.19564938895247097)--(0.48154853480643384,0.1413708361011475)--(0.4606962286783804,0.0870922832498236)--(0.43984392255032695,0.03281373039850108)--(0.41899161642227345,-0.021464822452822375)--(0.39813931029422,-0.0757433753041449)--(0.37728700416616656,-0.1300219281554688)--(0.35643469803811306,-0.18430048100679133)--(0.3355823919100596,-0.2385790338581148)--(0.3147300857820061,-0.2928575867094373)--(0.2938777796539527,-0.3471361395607612)--(0.27302547352589923,-0.4014146924120838)--(0.25217316739784573,-0.45569324526340765)--(0.2313208612697923,-0.5099717981147311)--(0.21046855514173882,-0.5642503509660536)--(0.18961624901368535,-0.6185289038173771)--(0.16876394288563187,-0.6728074566686996)--(0.1479116367575784,-0.7270860095200236)--(0.12705933062952496,-0.781364562371346)--(0.10620702450147149,-0.83564311522267)--(0.08535471837341801,-0.8899216680739925)--(0.06450241224536456,-0.944200220925316)--(0.04365010611731155,-0.9984787737766385)--(0.02279779998925762,-1.052757326627962)--(0.0019454938612041542,-1.1070358794792863)--(0,-1.1121)--(0,-2.383137726113157)--(1.8882972439927612,-2.383137726113157)--(1.878653045386016,-2.3580338771397997)--(1.8578007392579625,-2.303755324288477)--(1.836948433129909,-2.249476771437155)--(1.8160961270018552,-2.1951982185858294)--(1.7952438208738022,-2.140919665734507)--(1.7743915147457487,-2.0866411128831843)--(1.7535392086176953,-2.03236256003186)--(1.7326869024896419,-1.9780840071805377)--(1.7118345963615884,-1.9238054543292142)--(1.690982290233535,-1.8695269014778917)--(1.6701299841054813,-1.8152483486265683)--(1.6492776779774279,-1.7609697957752457)--(1.6284253718493744,-1.7066912429239223)--(1.607573065721321,-1.6524126900725997)--(1.5867207595932675,-1.5981341372212754)--(1.565868453465214,-1.543855584369952)--(1.5450161473371606,-1.4895770315186294)--(1.5241638412091072,-1.435298478667306)--(1.5033115350810538,-1.3810199258159834)--(1.4824592289530003,-1.3267413729646589)--(1.4616069228249466,-1.2724628201133366)--(1.4407546166968932,-1.2181842672620131)--(1.4199023105688398,-1.1639057144106906)--(1.3990500044407863,-1.1096271615593671)--(1.3781976983127329,-1.0553486087080446)--(1.3573453921846794,-1.0010700558567212)--(1.336493086056626,-0.9467915030053967)--(1.3156407799285725,-0.8925129501540746)--(1.294788473800519,-0.8382343973027507)--(1.2739361676724656,-0.7839558444514282)--(1.253083861544412,-0.7296772916001047)--(1.2322315554163585,-0.6753987387487822)--(1.211379249288305,-0.6211201858974583)--(1.1905269431602516,-0.5668416330461358)--(1.1696746370321982,-0.5125630801948123)--(1.1488223309041448,-0.4582845273434898)--(1.1279700247760913,-0.4040059744921659)--(1.1071177186480379,-0.349727421640842)--(1.0862654125199844,-0.2954488687895195)--(1.065413106391931,-0.24117031593819602)--(1.0445608002638775,-0.18689176308687347)--(1.0237084941358239,-0.13261321023554956)--(1.0028561880077704,-0.07833465738422704)--(0.982003881879717,-0.02405610453290359)--(0.9611515757516635,0.030222448318418933)--(0.9402992696236101,0.08450100116974285)--(0.9194469634955567,0.1387795540210663)--(0.8985946573675032,0.19305810687238883)--(0.8777423512394497,0.24733665972371274)--(0.8568900451113962,0.30161521257503526)--(0.8360377389833428,0.3558937654263587)--(0.8151854328552893,0.41017231827768125)--(0.7943331267272359,0.4644508711290051)--(0.7734808205991823,0.5187294239803276)--(0.7526285144711289,0.5730079768316516)--(0.7317762083430754,0.6272865296829742)--(0.710923902215022,0.6815650825342976)--(0.7057108256830086,0.6951347207471287)--(0.7031042874170019,0.7019195398535439)--(0.7004977491509953,0.7087043589599589);
						\draw[line width=0pt,dash pattern=on 1pt off 1pt,color=zzffqq,fill=zzffqq,fill opacity=0.25](0,2.5321)--(0,-1.1121)--(0.0019454938612041542,-1.1070358794792863)--(0.02279779998925762,-1.052757326627962)--(0.04365010611731155,-0.9984787737766385)--(0.06450241224536456,-0.944200220925316)--(0.08535471837341801,-0.8899216680739925)--(0.10620702450147149,-0.83564311522267)--(0.12705933062952496,-0.781364562371346)--(0.1479116367575784,-0.7270860095200236)--(0.16876394288563187,-0.6728074566686996)--(0.18961624901368535,-0.6185289038173771)--(0.21046855514173882,-0.5642503509660536)--(0.2313208612697923,-0.5099717981147311)--(0.25217316739784573,-0.45569324526340765)--(0.27302547352589923,-0.4014146924120838)--(0.2938777796539527,-0.3471361395607612)--(0.3147300857820061,-0.2928575867094373)--(0.3355823919100596,-0.2385790338581148)--(0.35643469803811306,-0.18430048100679133)--(0.37728700416616656,-0.1300219281554688)--(0.39813931029422,-0.0757433753041449)--(0.41899161642227345,-0.021464822452822375)--(0.43984392255032695,0.03281373039850108)--(0.4606962286783804,0.0870922832498236)--(0.48154853480643384,0.1413708361011475)--(0.5024008409344873,0.19564938895247097)--(0.5232531470625408,0.2499279418037935)--(0.5441054531905942,0.3042064946551174)--(0.5649577593186478,0.35848504750643995)--(0.5858100654467012,0.4127636003577638)--(0.6066623715747547,0.4670421532090864)--(0.6275146777028081,0.5213207060604098)--(0.6483669838308616,0.5755992589117324)--(0.6692192899589151,0.6298778117630562)--(0.6900715960869686,0.6841563646143788)--(0.7,0.7075324268706145)--(0.7,0.7124675731293848)--(0.6900715960869686,0.735843635385621)--(0.6692192899589151,0.7901221882369436)--(0.6483669838308616,0.8444007410882675)--(0.6275146777028081,0.89867929393959)--(0.6066623715747547,0.9529578467909139)--(0.5858100654467012,1.0072363996422364)--(0.5649577593186478,1.0615149524935599)--(0.5441054531905942,1.1157935053448824)--(0.5232531470625408,1.1700720581962063)--(0.5024008409344873,1.2243506110475288)--(0.48154853480643384,1.2786291638988525)--(0.4606962286783804,1.3329077167501762)--(0.43984392255032695,1.3871862696014987)--(0.41899161642227345,1.4414648224528224)--(0.39813931029422,1.495743375304145)--(0.37728700416616656,1.5500219281554686)--(0.35643469803811306,1.6043004810067911)--(0.3355823919100596,1.658579033858115)--(0.3147300857820061,1.7128575867094373)--(0.2938777796539527,1.7671361395607612)--(0.27302547352589923,1.8214146924120838)--(0.25217316739784573,1.8756932452634074)--(0.2313208612697923,1.929971798114731)--(0.21046855514173882,1.9842503509660536)--(0.18961624901368535,2.0385289038173773)--(0.16876394288563187,2.0928074566687)--(0.1479116367575784,2.1470860095200237)--(0.12705933062952496,2.201364562371346)--(0.10620702450147149,2.2556431152226697)--(0.08535471837341801,2.3099216680739922)--(0.06450241224536456,2.3642002209253157)--(0.04365010611731155,2.4184787737766387)--(0.02279779998925762,2.4727573266279625)--(0.0019454938612041542,2.527035879479286)--(0,2.5321);
						\draw[line width=0pt,dash pattern=on 1pt off 1pt,color=zzffqq,fill=zzffqq,fill opacity=0.25](0,2.5321)--(0,-1.1121)--(0.0019454938612041542,-1.1070358794792863)--(0.02279779998925762,-1.052757326627962)--(0.04365010611731155,-0.9984787737766385)--(0.06450241224536456,-0.944200220925316)--(0.08535471837341801,-0.8899216680739925)--(0.10620702450147149,-0.83564311522267)--(0.12705933062952496,-0.781364562371346)--(0.1479116367575784,-0.7270860095200236)--(0.16876394288563187,-0.6728074566686996)--(0.18961624901368535,-0.6185289038173771)--(0.21046855514173882,-0.5642503509660536)--(0.2313208612697923,-0.5099717981147311)--(0.25217316739784573,-0.45569324526340765)--(0.27302547352589923,-0.4014146924120838)--(0.2938777796539527,-0.3471361395607612)--(0.3147300857820061,-0.2928575867094373)--(0.3355823919100596,-0.2385790338581148)--(0.35643469803811306,-0.18430048100679133)--(0.37728700416616656,-0.1300219281554688)--(0.39813931029422,-0.0757433753041449)--(0.41899161642227345,-0.021464822452822375)--(0.43984392255032695,0.03281373039850108)--(0.4606962286783804,0.0870922832498236)--(0.48154853480643384,0.1413708361011475)--(0.5024008409344873,0.19564938895247097)--(0.5232531470625408,0.2499279418037935)--(0.5441054531905942,0.3042064946551174)--(0.5649577593186478,0.35848504750643995)--(0.5858100654467012,0.4127636003577638)--(0.6066623715747547,0.4670421532090864)--(0.6275146777028081,0.5213207060604098)--(0.6483669838308616,0.5755992589117324)--(0.6692192899589151,0.6298778117630562)--(0.6900715960869686,0.6841563646143788)--(0.7,0.7075324268706145)--(0.7,0.7124675731293848)--(0.6900715960869686,0.735843635385621)--(0.6692192899589151,0.7901221882369436)--(0.6483669838308616,0.8444007410882675)--(0.6275146777028081,0.89867929393959)--(0.6066623715747547,0.9529578467909139)--(0.5858100654467012,1.0072363996422364)--(0.5649577593186478,1.0615149524935599)--(0.5441054531905942,1.1157935053448824)--(0.5232531470625408,1.1700720581962063)--(0.5024008409344873,1.2243506110475288)--(0.48154853480643384,1.2786291638988525)--(0.4606962286783804,1.3329077167501762)--(0.43984392255032695,1.3871862696014987)--(0.41899161642227345,1.4414648224528224)--(0.39813931029422,1.495743375304145)--(0.37728700416616656,1.5500219281554686)--(0.35643469803811306,1.6043004810067911)--(0.3355823919100596,1.658579033858115)--(0.3147300857820061,1.7128575867094373)--(0.2938777796539527,1.7671361395607612)--(0.27302547352589923,1.8214146924120838)--(0.25217316739784573,1.8756932452634074)--(0.2313208612697923,1.929971798114731)--(0.21046855514173882,1.9842503509660536)--(0.18961624901368535,2.0385289038173773)--(0.16876394288563187,2.0928074566687)--(0.1479116367575784,2.1470860095200237)--(0.12705933062952496,2.201364562371346)--(0.10620702450147149,2.2556431152226697)--(0.08535471837341801,2.3099216680739922)--(0.06450241224536456,2.3642002209253157)--(0.04365010611731155,2.4184787737766387)--(0.02279779998925762,2.4727573266279625)--(0.0019454938612041542,2.527035879479286)--(0,2.5321);
						\draw[line width=0pt,dash pattern=on 1pt off 1pt,color=zzffqq,fill=zzffqq,fill opacity=0.25](0,2.5321)--(0,-1.1121)--(0.0019454938612041542,-1.1070358794792863)--(0.02279779998925762,-1.052757326627962)--(0.04365010611731155,-0.9984787737766385)--(0.06450241224536456,-0.944200220925316)--(0.08535471837341801,-0.8899216680739925)--(0.10620702450147149,-0.83564311522267)--(0.12705933062952496,-0.781364562371346)--(0.1479116367575784,-0.7270860095200236)--(0.16876394288563187,-0.6728074566686996)--(0.18961624901368535,-0.6185289038173771)--(0.21046855514173882,-0.5642503509660536)--(0.2313208612697923,-0.5099717981147311)--(0.25217316739784573,-0.45569324526340765)--(0.27302547352589923,-0.4014146924120838)--(0.2938777796539527,-0.3471361395607612)--(0.3147300857820061,-0.2928575867094373)--(0.3355823919100596,-0.2385790338581148)--(0.35643469803811306,-0.18430048100679133)--(0.37728700416616656,-0.1300219281554688)--(0.39813931029422,-0.0757433753041449)--(0.41899161642227345,-0.021464822452822375)--(0.43984392255032695,0.03281373039850108)--(0.4606962286783804,0.0870922832498236)--(0.48154853480643384,0.1413708361011475)--(0.5024008409344873,0.19564938895247097)--(0.5232531470625408,0.2499279418037935)--(0.5441054531905942,0.3042064946551174)--(0.5649577593186478,0.35848504750643995)--(0.5858100654467012,0.4127636003577638)--(0.6066623715747547,0.4670421532090864)--(0.6275146777028081,0.5213207060604098)--(0.6483669838308616,0.5755992589117324)--(0.6692192899589151,0.6298778117630562)--(0.6900715960869686,0.6841563646143788)--(0.7,0.7075324268706145)--(0.7,0.7124675731293848)--(0.6900715960869686,0.735843635385621)--(0.6692192899589151,0.7901221882369436)--(0.6483669838308616,0.8444007410882675)--(0.6275146777028081,0.89867929393959)--(0.6066623715747547,0.9529578467909139)--(0.5858100654467012,1.0072363996422364)--(0.5649577593186478,1.0615149524935599)--(0.5441054531905942,1.1157935053448824)--(0.5232531470625408,1.1700720581962063)--(0.5024008409344873,1.2243506110475288)--(0.48154853480643384,1.2786291638988525)--(0.4606962286783804,1.3329077167501762)--(0.43984392255032695,1.3871862696014987)--(0.41899161642227345,1.4414648224528224)--(0.39813931029422,1.495743375304145)--(0.37728700416616656,1.5500219281554686)--(0.35643469803811306,1.6043004810067911)--(0.3355823919100596,1.658579033858115)--(0.3147300857820061,1.7128575867094373)--(0.2938777796539527,1.7671361395607612)--(0.27302547352589923,1.8214146924120838)--(0.25217316739784573,1.8756932452634074)--(0.2313208612697923,1.929971798114731)--(0.21046855514173882,1.9842503509660536)--(0.18961624901368535,2.0385289038173773)--(0.16876394288563187,2.0928074566687)--(0.1479116367575784,2.1470860095200237)--(0.12705933062952496,2.201364562371346)--(0.10620702450147149,2.2556431152226697)--(0.08535471837341801,2.3099216680739922)--(0.06450241224536456,2.3642002209253157)--(0.04365010611731155,2.4184787737766387)--(0.02279779998925762,2.4727573266279625)--(0.0019454938612041542,2.527035879479286)--(0,2.5321);
						\draw[line width=0pt,dash pattern=on 1pt off 1pt,color=zzffqq,fill=zzffqq,fill opacity=0.25](0,2.5321)--(0,-1.1121)--(0.0019454938612041542,-1.1070358794792863)--(0.02279779998925762,-1.052757326627962)--(0.04365010611731155,-0.9984787737766385)--(0.06450241224536456,-0.944200220925316)--(0.08535471837341801,-0.8899216680739925)--(0.10620702450147149,-0.83564311522267)--(0.12705933062952496,-0.781364562371346)--(0.1479116367575784,-0.7270860095200236)--(0.16876394288563187,-0.6728074566686996)--(0.18961624901368535,-0.6185289038173771)--(0.21046855514173882,-0.5642503509660536)--(0.2313208612697923,-0.5099717981147311)--(0.25217316739784573,-0.45569324526340765)--(0.27302547352589923,-0.4014146924120838)--(0.2938777796539527,-0.3471361395607612)--(0.3147300857820061,-0.2928575867094373)--(0.3355823919100596,-0.2385790338581148)--(0.35643469803811306,-0.18430048100679133)--(0.37728700416616656,-0.1300219281554688)--(0.39813931029422,-0.0757433753041449)--(0.41899161642227345,-0.021464822452822375)--(0.43984392255032695,0.03281373039850108)--(0.4606962286783804,0.0870922832498236)--(0.48154853480643384,0.1413708361011475)--(0.5024008409344873,0.19564938895247097)--(0.5232531470625408,0.2499279418037935)--(0.5441054531905942,0.3042064946551174)--(0.5649577593186478,0.35848504750643995)--(0.5858100654467012,0.4127636003577638)--(0.6066623715747547,0.4670421532090864)--(0.6275146777028081,0.5213207060604098)--(0.6483669838308616,0.5755992589117324)--(0.6692192899589151,0.6298778117630562)--(0.6900715960869686,0.6841563646143788)--(0.7,0.7075324268706145)--(0.7,0.7124675731293848)--(0.6900715960869686,0.735843635385621)--(0.6692192899589151,0.7901221882369436)--(0.6483669838308616,0.8444007410882675)--(0.6275146777028081,0.89867929393959)--(0.6066623715747547,0.9529578467909139)--(0.5858100654467012,1.0072363996422364)--(0.5649577593186478,1.0615149524935599)--(0.5441054531905942,1.1157935053448824)--(0.5232531470625408,1.1700720581962063)--(0.5024008409344873,1.2243506110475288)--(0.48154853480643384,1.2786291638988525)--(0.4606962286783804,1.3329077167501762)--(0.43984392255032695,1.3871862696014987)--(0.41899161642227345,1.4414648224528224)--(0.39813931029422,1.495743375304145)--(0.37728700416616656,1.5500219281554686)--(0.35643469803811306,1.6043004810067911)--(0.3355823919100596,1.658579033858115)--(0.3147300857820061,1.7128575867094373)--(0.2938777796539527,1.7671361395607612)--(0.27302547352589923,1.8214146924120838)--(0.25217316739784573,1.8756932452634074)--(0.2313208612697923,1.929971798114731)--(0.21046855514173882,1.9842503509660536)--(0.18961624901368535,2.0385289038173773)--(0.16876394288563187,2.0928074566687)--(0.1479116367575784,2.1470860095200237)--(0.12705933062952496,2.201364562371346)--(0.10620702450147149,2.2556431152226697)--(0.08535471837341801,2.3099216680739922)--(0.06450241224536456,2.3642002209253157)--(0.04365010611731155,2.4184787737766387)--(0.02279779998925762,2.4727573266279625)--(0.0019454938612041542,2.527035879479286)--(0,2.5321);
						\draw[line width=0pt,dash pattern=on 1pt off 1pt,color=qqqqff,fill=qqqqff,fill opacity=0.25](1.6781558199939908,3.2561395994443574)--(1.6701299841054813,3.235248348626568)--(1.6492776779774279,3.1809697957752454)--(1.6284253718493744,3.126691242923922)--(1.607573065721321,3.0724126900725994)--(1.5867207595932675,3.0181341372212755)--(1.565868453465214,2.963855584369952)--(1.5450161473371606,2.9095770315186296)--(1.5241638412091072,2.8552984786673057)--(1.5033115350810538,2.801019925815983)--(1.4824592289530003,2.7467413729646593)--(1.4616069228249466,2.692462820113337)--(1.4407546166968932,2.6381842672620133)--(1.4199023105688398,2.5839057144106907)--(1.3990500044407863,2.5296271615593673)--(1.3781976983127329,2.4753486087080443)--(1.3573453921846794,2.421070055856721)--(1.336493086056626,2.366791503005397)--(1.3156407799285725,2.3125129501540744)--(1.294788473800519,2.2582343973027506)--(1.2739361676724656,2.2039558444514284)--(1.253083861544412,2.1496772916001046)--(1.2322315554163585,2.095398738748782)--(1.211379249288305,2.0411201858974586)--(1.1905269431602516,1.9868416330461358)--(1.1696746370321982,1.9325630801948122)--(1.1488223309041448,1.8782845273434896)--(1.1279700247760913,1.824005974492166)--(1.1071177186480379,1.769727421640842)--(1.0862654125199844,1.7154488687895195)--(1.065413106391931,1.6611703159381959)--(1.0445608002638775,1.6068917630868733)--(1.0237084941358239,1.5526132102355497)--(1.0028561880077704,1.498334657384227)--(0.982003881879717,1.4440561045329035)--(0.9611515757516635,1.389777551681581)--(0.9402992696236101,1.3354989988302572)--(0.9194469634955567,1.2812204459789336)--(0.8985946573675032,1.226941893127611)--(0.8777423512394497,1.1726633402762872)--(0.8568900451113962,1.1183847874249646)--(0.8360377389833428,1.0641062345736412)--(0.8151854328552893,1.0098276817223186)--(0.7943331267272359,0.9555491288709946)--(0.7734808205991823,0.9012705760196721)--(0.7526285144711289,0.8469920231683487)--(0.7317762083430754,0.7927134703170261)--(0.710923902215022,0.7384349174657022)--(0.7057108256830086,0.7248652792528711)--(0.7031042874170019,0.7180804601464558)--(0.7004977491509953,0.711295641040041)--(0.7,0.7124675731293848)--(0.7,0.7075324268706145)--(0.7004977491509953,0.7087043589599589)--(0.7031042874170019,0.7019195398535439)--(0.7057108256830086,0.6951347207471287)--(0.710923902215022,0.6815650825342976)--(0.7317762083430754,0.6272865296829742)--(0.7526285144711289,0.5730079768316516)--(0.7734808205991823,0.5187294239803276)--(0.7943331267272359,0.4644508711290051)--(0.8151854328552893,0.41017231827768125)--(0.8360377389833428,0.3558937654263587)--(0.8568900451113962,0.30161521257503526)--(0.8777423512394497,0.24733665972371274)--(0.8985946573675032,0.19305810687238883)--(0.9194469634955567,0.1387795540210663)--(0.9402992696236101,0.08450100116974285)--(0.9611515757516635,0.030222448318418933)--(0.982003881879717,-0.02405610453290359)--(1.0028561880077704,-0.07833465738422704)--(1.0237084941358239,-0.13261321023554956)--(1.0445608002638775,-0.18689176308687347)--(1.065413106391931,-0.24117031593819602)--(1.0862654125199844,-0.2954488687895195)--(1.1071177186480379,-0.349727421640842)--(1.1279700247760913,-0.4040059744921659)--(1.1488223309041448,-0.4582845273434898)--(1.1696746370321982,-0.5125630801948123)--(1.1905269431602516,-0.5668416330461358)--(1.211379249288305,-0.6211201858974583)--(1.2322315554163585,-0.6753987387487822)--(1.253083861544412,-0.7296772916001047)--(1.2739361676724656,-0.7839558444514282)--(1.294788473800519,-0.8382343973027507)--(1.3156407799285725,-0.8925129501540746)--(1.336493086056626,-0.9467915030053967)--(1.3573453921846794,-1.0010700558567212)--(1.3781976983127329,-1.0553486087080446)--(1.3990500044407863,-1.1096271615593671)--(1.4199023105688398,-1.1639057144106906)--(1.4407546166968932,-1.2181842672620131)--(1.4616069228249466,-1.2724628201133366)--(1.4824592289530003,-1.3267413729646589)--(1.5033115350810538,-1.3810199258159834)--(1.5241638412091072,-1.435298478667306)--(1.5450161473371606,-1.4895770315186294)--(1.565868453465214,-1.543855584369952)--(1.5867207595932675,-1.5981341372212754)--(1.607573065721321,-1.6524126900725997)--(1.6284253718493744,-1.7066912429239223)--(1.6492776779774279,-1.7609697957752457)--(1.6701299841054813,-1.8152483486265683)--(1.690982290233535,-1.8695269014778917)--(1.7118345963615884,-1.9238054543292142)--(1.7326869024896419,-1.9780840071805377)--(1.7535392086176953,-2.03236256003186)--(1.7743915147457487,-2.0866411128831843)--(1.7952438208738022,-2.140919665734507)--(1.8160961270018552,-2.1951982185858294)--(1.836948433129909,-2.249476771437155)--(1.8578007392579625,-2.303755324288477)--(1.878653045386016,-2.3580338771397997)--(1.8882972439927612,-2.383137726113157)--(3.06926886601463,-2.383137726113157)--(3.06926886601463,3.2561395994443574);
						\draw[line width=0pt,dash pattern=on 1pt off 1pt,color=qqqqff,fill=qqqqff,fill opacity=0.25](1.6781558199939908,3.2561395994443574)--(1.6701299841054813,3.235248348626568)--(1.6492776779774279,3.1809697957752454)--(1.6284253718493744,3.126691242923922)--(1.607573065721321,3.0724126900725994)--(1.5867207595932675,3.0181341372212755)--(1.565868453465214,2.963855584369952)--(1.5450161473371606,2.9095770315186296)--(1.5241638412091072,2.8552984786673057)--(1.5033115350810538,2.801019925815983)--(1.4824592289530003,2.7467413729646593)--(1.4616069228249466,2.692462820113337)--(1.4407546166968932,2.6381842672620133)--(1.4199023105688398,2.5839057144106907)--(1.3990500044407863,2.5296271615593673)--(1.3781976983127329,2.4753486087080443)--(1.3573453921846794,2.421070055856721)--(1.336493086056626,2.366791503005397)--(1.3156407799285725,2.3125129501540744)--(1.294788473800519,2.2582343973027506)--(1.2739361676724656,2.2039558444514284)--(1.253083861544412,2.1496772916001046)--(1.2322315554163585,2.095398738748782)--(1.211379249288305,2.0411201858974586)--(1.1905269431602516,1.9868416330461358)--(1.1696746370321982,1.9325630801948122)--(1.1488223309041448,1.8782845273434896)--(1.1279700247760913,1.824005974492166)--(1.1071177186480379,1.769727421640842)--(1.0862654125199844,1.7154488687895195)--(1.065413106391931,1.6611703159381959)--(1.0445608002638775,1.6068917630868733)--(1.0237084941358239,1.5526132102355497)--(1.0028561880077704,1.498334657384227)--(0.982003881879717,1.4440561045329035)--(0.9611515757516635,1.389777551681581)--(0.9402992696236101,1.3354989988302572)--(0.9194469634955567,1.2812204459789336)--(0.8985946573675032,1.226941893127611)--(0.8777423512394497,1.1726633402762872)--(0.8568900451113962,1.1183847874249646)--(0.8360377389833428,1.0641062345736412)--(0.8151854328552893,1.0098276817223186)--(0.7943331267272359,0.9555491288709946)--(0.7734808205991823,0.9012705760196721)--(0.7526285144711289,0.8469920231683487)--(0.7317762083430754,0.7927134703170261)--(0.710923902215022,0.7384349174657022)--(0.7057108256830086,0.7248652792528711)--(0.7031042874170019,0.7180804601464558)--(0.7004977491509953,0.711295641040041)--(0.7,0.7124675731293848)--(0.7,0.7075324268706145)--(0.7004977491509953,0.7087043589599589)--(0.7031042874170019,0.7019195398535439)--(0.7057108256830086,0.6951347207471287)--(0.710923902215022,0.6815650825342976)--(0.7317762083430754,0.6272865296829742)--(0.7526285144711289,0.5730079768316516)--(0.7734808205991823,0.5187294239803276)--(0.7943331267272359,0.4644508711290051)--(0.8151854328552893,0.41017231827768125)--(0.8360377389833428,0.3558937654263587)--(0.8568900451113962,0.30161521257503526)--(0.8777423512394497,0.24733665972371274)--(0.8985946573675032,0.19305810687238883)--(0.9194469634955567,0.1387795540210663)--(0.9402992696236101,0.08450100116974285)--(0.9611515757516635,0.030222448318418933)--(0.982003881879717,-0.02405610453290359)--(1.0028561880077704,-0.07833465738422704)--(1.0237084941358239,-0.13261321023554956)--(1.0445608002638775,-0.18689176308687347)--(1.065413106391931,-0.24117031593819602)--(1.0862654125199844,-0.2954488687895195)--(1.1071177186480379,-0.349727421640842)--(1.1279700247760913,-0.4040059744921659)--(1.1488223309041448,-0.4582845273434898)--(1.1696746370321982,-0.5125630801948123)--(1.1905269431602516,-0.5668416330461358)--(1.211379249288305,-0.6211201858974583)--(1.2322315554163585,-0.6753987387487822)--(1.253083861544412,-0.7296772916001047)--(1.2739361676724656,-0.7839558444514282)--(1.294788473800519,-0.8382343973027507)--(1.3156407799285725,-0.8925129501540746)--(1.336493086056626,-0.9467915030053967)--(1.3573453921846794,-1.0010700558567212)--(1.3781976983127329,-1.0553486087080446)--(1.3990500044407863,-1.1096271615593671)--(1.4199023105688398,-1.1639057144106906)--(1.4407546166968932,-1.2181842672620131)--(1.4616069228249466,-1.2724628201133366)--(1.4824592289530003,-1.3267413729646589)--(1.5033115350810538,-1.3810199258159834)--(1.5241638412091072,-1.435298478667306)--(1.5450161473371606,-1.4895770315186294)--(1.565868453465214,-1.543855584369952)--(1.5867207595932675,-1.5981341372212754)--(1.607573065721321,-1.6524126900725997)--(1.6284253718493744,-1.7066912429239223)--(1.6492776779774279,-1.7609697957752457)--(1.6701299841054813,-1.8152483486265683)--(1.690982290233535,-1.8695269014778917)--(1.7118345963615884,-1.9238054543292142)--(1.7326869024896419,-1.9780840071805377)--(1.7535392086176953,-2.03236256003186)--(1.7743915147457487,-2.0866411128831843)--(1.7952438208738022,-2.140919665734507)--(1.8160961270018552,-2.1951982185858294)--(1.836948433129909,-2.249476771437155)--(1.8578007392579625,-2.303755324288477)--(1.878653045386016,-2.3580338771397997)--(1.8882972439927612,-2.383137726113157)--(3.06926886601463,-2.383137726113157)--(3.06926886601463,3.2561395994443574);
						\draw (0.7,0.71) node {\LARGE $\cdot$};
						\draw (1.0,0.71) node {\scriptsize $z_0$};
						\draw (0.7,-0.718) node {\LARGE $\cdot$};
						\draw (1.0,-0.71) node {\scriptsize $\overline{z_0}$};
						\draw[color=blue] (1.5,0.9) node {$R_4$};
						\draw[color=red] (0.85,1.8) node {$R_1$};
						\draw[color=green] (0.39,0.71) node {$R_3$};
						\draw[color=yellow] (0.92,-1.5) node {$R_2$};
					\end{axis}
				\end{tikzpicture}
				\caption{The regions $R_1,R_2,R_3,R_4$ as described in the proof above, with $c_0 = 0.7.$ Notice that we dropped the condition that $\text{dist}(s,\supp(\tilde{\nu}))$ is small for a clearer visualisation.}
			\end{figure}
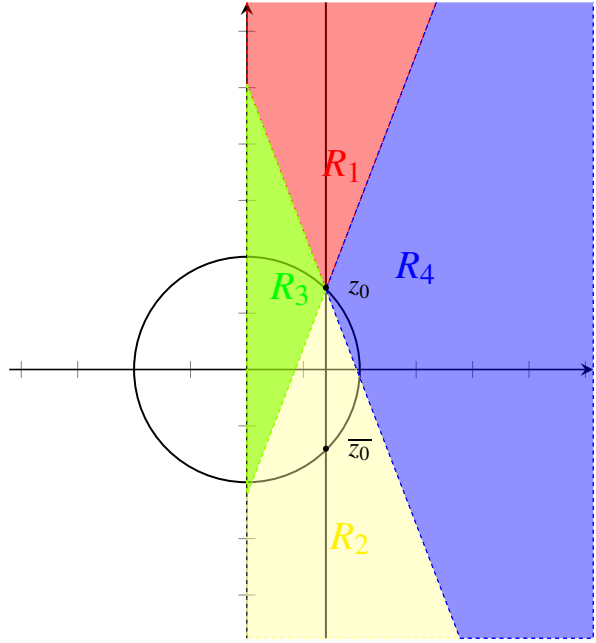
			
			\item $|sF_{\tilde{\nu}}(s)|$ is bounded as $s \to \infty,$ which follows from \eqref{eq:Cauchy-transform}.  
		\end{enumerate}
		
		From these properties, we are led to consider the function $H(s) = (s-z_0)(s-\overline{z_0})F_{\tilde{\nu}}(s).$ By the considerations above, $H$ is an entire function, bounded by a polynomial of degree 1. Thus, $H$ is itself a polynomial of degree 1, say, $H(s) = \alpha s + \beta.$ Then 
		$$
		F_{\tilde{\nu}}(s) = \frac{\alpha s + \beta}{(s-z_0)(s-\overline{z_0})} = \frac{\gamma}{z_0-s} + \frac{\sigma}{\overline{z_0}-s},
		$$
		for some $\gamma,\sigma \in \C.$ In order to finish, we employ the a theorem of  F. Riesz and M. Riesz \cite{Garnett} saying that for two measures $\mu_1$ and $\mu_2$ on the unit circle we have
		\begin{equation}\label{eq:unique-Cauchy-transform} 
			\int_{\mathbb{S}^1} \frac{1}{z-s} \, d(\mu_1 - \mu_2) (z) = 0\, \, \forall \, \, s \in \mathbb{D} \iff d\mu_1 - d\mu_2 = \phi(s) dm(s), 
		\end{equation}
		where $\phi \in H^1(\partial \mathbb{D}),$ and $dm$ denotes the arclength measure on the circle $\mathbb{S}^1.$  Applied to our case, we obtain
		$$
		\tilde{\nu} = \gamma \cdot \delta_{z_0} + \sigma \cdot \delta_{\overline{z_0}} + \phi(s) dm(s),
		$$
		for some $\phi \in H^1(\partial\mathbb{D}).$ But $\supp(\tilde{\nu} - \gamma\cdot \delta_{z_0} - \sigma \cdot \delta_{\overline{z_0}}) \subset \mathbb{S}^1 \cap \{\text{Re}(z) \le c_0\}.$ This shows that $\phi$ vanishes on the arc joining $z_0$ and $\overline{z_0}.$ Classical uniqueness result for functions from the Hardy space implies that $\phi \equiv 0.$ This implies that  
		$$ \supp(\nu) \subset \mathbb{S}^1 \cap \{ \text{Re}(z) \ge c_0\},$$
		which finishes the proof of Lemma \ref{lemma-support-measure-circle}. 
	\end{proof}
	
	With Lemma \ref{lemma-support-measure-circle} at our disposal, we may proceed to the proof of Theorem \ref{thm:second-laplace-full}. 
	
	\begin{proof}[Proof of Theorem \ref{thm:second-laplace-full}]  We first rewrite what we wish to prove in terms of Laplace transforms of measures on the circle, as done above. 
		
		For a finite measure $\mu$  on the interval $[0,1],$ let $\pi_1:\mathbb{S}^1 \to [-1,1]$ denote the projection onto the first coordinate, and let $p:[0,1] \to A = \{ z = e^{i \theta}, \, \theta \in [0,\pi/2]\}$ denote the inverse of this map restricted to the set $A.$ We then let 
		$$ \nu = p_{\ast}(\mu) $$
		be the pushforward measure of $\mu$ to the circle through $p.$ We readily see that this measure is finite, and $\supp(\nu) \subset A \subset \mathbb{S}^1 \cap \{\text{Re}(s) \ge 0\}.$ Finally, from the definition of $\nu$ and $\varphi,$ we may write
		$$ \varphi(x) = \int_{\mathbb{S}^1} e^{-z\pi |x|^2} \, d \nu(z) = \mathcal{L}\nu(\pi|x|^2).$$
		
		\noindent\textit{Proof of Part (1).} As $\varphi \in E^2_a,$ the main result shows that $\varphi \in E^{\infty}_{a-\eps}, \, \forall \, \eps > 0.$ But, by the correspondence above, we have $|\mathcal{L}\nu(t)| \le C e^{-(a-\eps)t},$ for any $\eps > 0.$ By Lemma \ref{lemma-support-measure-circle}, $\supp(\nu) \subset \mathbb{S}^1 \cap \{\text{Re}(s) \ge a - \eps\}, \, \forall \,\, \eps > 0.$ Thus, $\supp(\nu) \subset \mathbb{S}^1 \cap \{\text{Re}(s) \ge a \}.$ But this is equivalent to $\supp(\mu) \subset [a,1].$ This plainly implies that $|\varphi(x)|e^{a \pi |x|^2} \in L^{\infty}(\R).$ 
		
		On the other hand, we see that $\widehat{\varphi}$ may be written as 
		$$ \widehat{\varphi}(\xi) = \int_0^1 \overline{\mathcal{G}_r(\xi)} \cdot \frac{1}{(r + i \sqrt{1-r^2})^{1/2}} \, d \mu(r) =: \int_0^1  \overline{\mathcal{G}_r(\xi)} \, d \tilde{\mu}(r).$$ 
		The measure $\tilde{\mu}$ is again a \emph{finite} measure, and one can repeat the argument above, now using the ``conjugate'' map $\overline{p} : [0,1] \to \overline{A} = \{ z = e^{i\theta}, \, \theta \in [-\pi/2,0]\}$ to define the pushforward measure. This directly implies that $|\widehat{\varphi}(\xi)|e^{a \pi |\xi|^2} \in L^{\infty}(\R),$ which finally shows that $\varphi \in E^{\infty}_a,$ as desired. \\
		
		\noindent\textit{Proof of Part (2).} We first notice that for $\beta \not\in \{-\pi/4 - k\pi/2, k \in \Z\},$ Theorem \ref{thm:vemuri} covers this part. Thus, we may suppose without loss of generality that $\beta = -\frac{\pi}{4} - \frac{k \pi}{2}, \, k \in \Z.$ 
		
		If $\varphi \in E^{\infty}_a,$ then either Theorem \ref{thm:vemuri} -- or even Vemuri's Theorem  3.1 -- shows that $\mathcal{F}_{-\pi/4 - k \pi/2} \varphi \in E^{\infty}_{\tanh(\alpha) - \eps}, \, \forall \,\, \eps> 0, \, \forall \,\, k \in \Z.$ Now, the computation on \eqref{eq:gaussian-fractional} with $\lambda = r + i\sqrt{1-r^2}, \, \beta = - \pi/4 + k\pi/2$ shows that   
		\begin{equation}\label{eq:frft-gauss}
			|\mathcal{F}_{-\pi/4 - k \pi/2} \mathcal{G}_r(x)| \lesssim \exp\left( - \frac{r \pi |x|^2}{1 -(-1)^k \sqrt{1-r^2}} \right). 
		\end{equation}
		On the other hand, as $\varphi \in E^{\infty}_{\tanh(2\alpha)},$ using the same argument as in the proof of Part (1) above, we have that $\supp(\mu) \subset [\tanh(2\alpha),1].$ Thus, as we know that the functions of $r \in [0,1]$ 
		$$
		r\mapsto \frac{r}{1 - \sqrt{1-r^2}}, \,\,\,\, r \mapsto \frac{r}{1+\sqrt{1-r^2}}
		$$
		are, respectively, decreasing and increasing, \eqref{eq:frft-gauss} implies that 
		\begin{align*}
			|\mathcal{F}_{-\pi/4 - k \pi/2} \varphi(x)| & \lesssim \|\mu\|_{TV} \max \left\{ \exp\left( - \frac{a \pi |x|^2}{1 + \sqrt{1-a^2}} \right), \exp(-\pi |x|^2)  \right\} \cr 
			& = \|\mu\|_{TV}  \max \left\{ \exp(-\tanh(\alpha) \pi |x|^2), \exp(-\pi |x|^2)\right\}, \cr 
		\end{align*}
		as $a = \tanh(2\alpha).$ This concludes the proof of Part (2), and thus also that of Theorem \ref{thm:second-laplace-full}. 
	\end{proof}
	
	\begin{remark}
		Using the notation employed in the statement of Conjecture \ref{conjecture:upgraded-vemuri}, we have, for given $t \in \R$ and $\varepsilon>0,$ that $\mathcal{F}_{-4\pi t}f \in E_{\Omega_{\alpha}(t) - \varepsilon}^{\infty}$ for any $t\in\R.$ A calculation using \eqref{eq:gaussian-fractional} and the same strategy as in Part (2) of the proof of Theorem \ref{thm:second-laplace-full} above shows that, in fact, $\Phi(\cdot,t) \in E_{\Omega_{\alpha}(t)}^{\infty}.$ This shows the validity of Conjecture \ref{conjecture:upgraded-vemuri} for the class of Laplace transforms with support on the circle discussed above. 
	\end{remark}
	
	\section*{Acknowledgements} 
	
	We would like to express our gratitude towards Prof. Dimitar K. Dimitrov, who made several comments and remarks which helped shape the manuscript, to Prof. Sundaram Thangavelu, who pointed to us some results related to the work of Vemuri which led to its higher dimensional case, and to Biagio Cassano, for directing our attention to closely related work. We would also like to thank Danylo Radchenko and Prof. Christoph M. Thiele for valuable discussions regarding the applications to different contexts and the endpoint result. 
	
	A.K. was supported by Grant 275113 of the Research Council of Norway. J.P.G.R. acknowledges financial support by the European Research Council under the Grant Agreement No. 721675 “Regularity and Stability in Partial Differential Equations (RSPDE)''. 
	\nocite{Farah-Mokni}
	\addcontentsline{toc}{section}{References}
	\bibliographystyle{alpha}
	\bibliography{biblio}

\newcommand{\etalchar}[1]{$^{#1}$}
\begin{thebibliography}{EKPV08b}

\bibitem[AB77]{Amrein-Berthier}
W.~O. Amrein and A.~M. Berthier.
\newblock On support properties of {$L\sp{p}$}-functions and their {F}ourier
  transforms.
\newblock {\em J. Functional Analysis}, 24(3):258--267, 1977.

\bibitem[BCF22]{BarceloBiagioFanelli2021}
Juan~Antonio Barceló, Biagio Cassano, and Luca Fanelli.
\newblock Mass propagation for electromagnetic schrödinger evolutions.
\newblock {\em Nonlinear Analysis}, 217:112734, 2022.

\bibitem[BCK10]{Bourgain-Clozel-Kahane}
Jean Bourgain, Laurent Clozel, and Jean-Pierre Kahane.
\newblock Principe d'{H}eisenberg et fonctions positives.
\newblock {\em Ann. Inst. Fourier (Grenoble)}, 60(4):1215--1232, 2010.

\bibitem[BD06]{Bonami-Demange}
Aline Bonami and Bruno Demange.
\newblock A survey on uncertainty principles related to quadratic forms.
\newblock {\em Collect. Math.}, (Vol. Extra):1--36, 2006.

\bibitem[Bec75]{Beckner}
William Beckner.
\newblock Inequalities in {F}ourier analysis.
\newblock {\em Ann. of Math. (2)}, 102(1):159--182, 1975.

\bibitem[Ben85]{Benedicks}
Michael Benedicks.
\newblock On {F}ourier transforms of functions supported on sets of finite
  {L}ebesgue measure.
\newblock {\em J. Math. Anal. Appl.}, 106(1):180--183, 1985.

\bibitem[BFM03]{Farah-Mokni}
S.~Ben~Farah and K.~Mokni.
\newblock Uncertainty principle and the {$L^p$}-{$L^q$}-version of {M}organ's
  theorem on some groups.
\newblock {\em Russ. J. Math. Phys.}, 10(3):245--260, 2003.

\bibitem[CEK{\etalchar{+}}10]{Escauriaza2010}
M.~Cowling, L.~Escauriaza, C.~E. Kenig, G.~Ponce, and L.~Vega.
\newblock The {H}ardy uncertainty principle revisited.
\newblock {\em Indiana Univ. Math. J.}, 59(6):2007--2025, 2010.

\bibitem[CF15]{CassanoFanelli2013}
B.~Cassano and L.~Fanelli.
\newblock Sharp {H}ardy uncertainty principle and {G}aussian profiles of
  covariant {S}chr\"{o}dinger evolutions.
\newblock {\em Trans. Amer. Math. Soc.}, 367(3):2213--2233, 2015.

\bibitem[CF17]{CassanoFanelli2016}
B.~Cassano and L.~Fanelli.
\newblock Gaussian decay of harmonic oscillators and related models.
\newblock {\em Journal of Mathematical Analysis and Applications},
  456(1):214--228, 2017.

\bibitem[CG19]{Cohn-Goncalves}
Henry Cohn and Felipe Gon\c{c}alves.
\newblock An optimal uncertainty principle in twelve dimensions via modular
  forms.
\newblock {\em Invent. Math.}, 217(3):799--831, 2019.

\bibitem[CP83]{Cowling-Price}
Michael Cowling and John~F. Price.
\newblock Generalisations of {H}eisenberg's inequality.
\newblock In {\em Harmonic analysis ({C}ortona, 1982)}, volume 992 of {\em
  Lecture Notes in Math.}, pages 443--449. Springer, Berlin, 1983.

\bibitem[EKPV06]{Escauriaza2006}
L.~Escauriaza, C.~E. Kenig, G.~Ponce, and L.~Vega.
\newblock On uniqueness properties of solutions of {S}chr\"{o}dinger equations.
\newblock {\em Comm. Partial Differential Equations}, 31(10-12):1811--1823,
  2006.

\bibitem[EKPV08a]{Escauriaza20081}
L.~Escauriaza, C.~E. Kenig, G.~Ponce, and L.~Vega.
\newblock Convexity properties of solutions to the free {S}chr\"{o}dinger
  equation with {G}aussian decay.
\newblock {\em Math. Res. Lett.}, 15(5):957--971, 2008.

\bibitem[EKPV08b]{Escauriaza2008}
L.~Escauriaza, C.~E. Kenig, G.~Ponce, and L.~Vega.
\newblock Hardy's uncertainty principle, convexity and {S}chr\"{o}dinger
  evolutions.
\newblock {\em J. Eur. Math. Soc. (JEMS)}, 10(4):883--907, 2008.

\bibitem[EKPV10]{Escauriaza2010sharp}
Luis Escauriaza, Carlos~E Kenig, Gustavo Ponce, and Luis Vega.
\newblock The sharp hardy uncertainty principle for schr{\"o}dinger evolutions.
\newblock {\em Duke Mathematical Journal}, 155(1):163--187, 2010.

\bibitem[EKPV16]{Escauriaza2016}
L.~Escauriaza, C.~E. Kenig, G.~Ponce, and L.~Vega.
\newblock Hardy uncertainty principle, convexity and parabolic evolutions.
\newblock {\em Comm. Math. Phys.}, 346(2):667--678, 2016.

\bibitem[Gao16]{Gao}
Xin Gao.
\newblock On {B}eurling's uncertainty principle.
\newblock {\em Bull. Lond. Math. Soc.}, 48(2):341--348, 2016.

\bibitem[Gar07]{Garnett}
John~B. Garnett.
\newblock {\em Bounded analytic functions}, volume 236 of {\em Graduate Texts
  in Mathematics}.
\newblock Springer, New York, first edition, 2007.

\bibitem[GOeSR20]{Goncalves-Oliveira-Ramos2}
Felipe Gon\c{c}alves, Diogo Oliveira~e Silva, and Jo\~{a}o P.~G. Ramos.
\newblock New sign uncertainty principles.
\newblock {\em arxiv preprint available at
  \href{https://arxiv.org/abs/2003.10771}{arXiv:2003.10771v3}}, 2020.

\bibitem[GOeSR21]{Goncalves-Oliveira-Ramos1}
Felipe Gon\c{c}alves, Diogo Oliveira~e Silva, and Jo\~{a}o P.~G. Ramos.
\newblock On regularity and mass concentration phenomena for the sign
  uncertainty principle.
\newblock {\em J. Geom. Anal.}, 31(6):6080--6101, 2021.

\bibitem[GOeSS17]{Goncalves-Oliveira-Steinerberger}
Felipe Gon\c{c}alves, Diogo Oliveira~e Silva, and Stefan Steinerberger.
\newblock Hermite polynomials, linear flows on the torus, and an uncertainty
  principle for roots.
\newblock {\em J. Math. Anal. Appl.}, 451(2):678--711, 2017.

\bibitem[Gon19]{Goncalves}
Felipe Gon\c{c}alves.
\newblock Orthogonal polynomials and sharp estimates for the {S}chr\"{o}dinger
  equation.
\newblock {\em Int. Math. Res. Not. IMRN}, (8):2356--2383, 2019.

\bibitem[H\"91]{Hormander}
Lars H\"{o}rmander.
\newblock A uniqueness theorem of {B}eurling for {F}ourier transform pairs.
\newblock {\em Ark. Mat.}, 29(2):237--240, 1991.

\bibitem[Har33]{Hardy}
G.~H. Hardy.
\newblock A {T}heorem {C}oncerning {F}ourier {T}ransforms.
\newblock {\em J. London Math. Soc.}, 8(3):227--231, 1933.

\bibitem[Hed12]{Hedenmalm}
Haakan Hedenmalm.
\newblock Heisenberg's uncertainty principle in the sense of {B}eurling.
\newblock {\em J. Anal. Math.}, 118(2):691--702, 2012.

\bibitem[KZ92]{KwongZettl}
M.K. Kwong and A.~Zettl.
\newblock {\em Norm Inequalities for Derivatives and Differences}.
\newblock Number Nr. 1536 in Lecture notes in mathematics. Springer-Verlag,
  1992.

\bibitem[Mor34]{Morgan}
G.~W. Morgan.
\newblock A {N}ote on {F}ourier {T}ransforms.
\newblock {\em J. London Math. Soc.}, 9(3):187--192, 1934.

\bibitem[Tak91a]{Takagi1}
Shin Takagi.
\newblock {Quantum Dynamics and Non-Inertial Frames of Reference. I:
  Generality}.
\newblock {\em Progress of Theoretical Physics}, 85(3):463--479, 03 1991.

\bibitem[Tak91b]{Takagi2}
Shin Takagi.
\newblock {Quantum Dynamics and Non-Inertial Frames of References. II: Harmonic
  Oscillators}.
\newblock {\em Progress of Theoretical Physics}, 85(4):723--742, 04 1991.

\bibitem[Vem08]{Vemuri}
M.~K. Vemuri.
\newblock Hermite expansions and hardy's theorem.
\newblock {\em arxiv preprint available at
  \href{https://arxiv.org/abs/0801.2234}{arXiv:0801.2234v1}}, 2008.

\end{thebibliography}
	
\end{document}